\numberwithin{equation}{section}
\theoremstyle{plain} 
\newtheorem{theorem}{\noindent\bf Theorem}[section] 
\newtheorem{lemma}[theorem]{\noindent\bf Lemma}
\newtheorem{proposition}[theorem]{\noindent\bf Proposition}
\theoremstyle{definition}
\newtheorem{definition}[theorem]{\noindent\bf Definition}
\newtheorem{remark}[theorem]{\noindent\bf Remark}
\newtheorem{example}[theorem]{\noindent\bf Example}
\newcommand{\deldel}{\sqrt{-1}\partial \overline{\partial}}
\newcommand{\e}{\varepsilon}
\newcommand{\pote}{\theta_{v}^{(\omega)}}
\newcommand{\Pote}{\theta_{V}^{(\omega)}}
\newcommand{\Poteo}{\theta_{V}^{(\omega_{\mathrm{ref}})}}
\newcommand{\Z}{Z_{\mathfrak{M}}}
\newcommand{\Eone}{\mathcal{E}^{1}(P^{*})}
\newcommand{\bPSH}{\mathrm{PSH}_{b}(P^{*})}
\newcommand{\refe}{\mathrm{ref}}
\newcommand{\JfctORG}{J_{\mathrm{red}}}
\newcommand{\Jfct}{J_{V, \mathrm{red}}^{\sigma}}
\newcommand{\Pvol}{|P^{*}|_{V}^{\sigma}}
\begin{document}

\title[Multiplier Hermitian-Einstein metrics on Fano manifolds of
KSM-type]
{Multiplier Hermitian-Einstein metrics on Fano manifolds of KSM-type}
\author[Y. Nakagawa and S. Nakamura]
{Yasuhiro Nakagawa and Satoshi Nakamura}  

\subjclass[2010]{ 
Primary 53C25; Secondary 53C55, 58E11.
}
\keywords{ 
Multiplier Hermitian-Einstein metrics, KSM-manifolds.
}
\thanks{ 
The second author was partly supported by Grant-in-Aid for JSPS
Fellow-ships for Young Scientists No.17J02783,
and is partly supported by JSPS KAKENHI Grant Number JP 21K20342.
}

\address{
Y.~Nakagawa:
Faculty of Advanced Science and Technology, Kumamoto University,
Kurokami 2-40-1, Chuo-ku, Kumamoto 860-8555,
Japan
}
\email{yasunaka@educ.kumamoto-u.ac.jp}

\address{
S.~Nakamura: (Former address)
National Institute of Technology, Numazu College, 
3600 Ooka, Numazu-shi, Shizuoka 410-8501, Japan
}
\email{satonakamura@numazu-ct.ac.jp}

\address{
S.~Nakamura: (Current address)
Department of Mathematics, Tokyo Institute of Technology,
2-12-1, Ookayama, Meguro-ku, Tokyo, 152-8551, Japan
}
\email{s.nakamura@math.titech.ac.jp}


\maketitle

\begin{abstract}
In this article we focus on multiplier Hermitian-Einstein metrics
introduced by Mabuchi
which include K\"{a}hler-Einstein metrics, K\"{a}hler-Ricci solitons and
Mabuchi solitons as special cases.
We also focus on KSM-manifolds, which are introduced by the first author
as toric bundles, to establish a criterion for the existence 
of multiplier Hermitian-Einstein metrics in terms of KSM-data.
An explicit example for a KSM-manifold admitting
a family of multiplier Hermitian-Einstein metrics
is constructed by using a continuous path 
connecting a K\"{a}hler-Ricci soliton with a Mabuchi soliton.
\end{abstract}


\tableofcontents

\section{Introduction}\label{Intro}

Finding a canonical metric on a manifold is a central problem in
differential geometry.
In particular the existence problem for K\"{a}hler-Einstein metrics is
one of the central topics.
Some generalizations of K\"{a}hler-Einstein metrics for Fano manifolds
with non-vanishing Futaki invariant
such as K\"{a}hler-Ricci solitons and Mabuchi solitons are discussed by
many experts.
Mabuchi~\cite{Ma03, Ma02a, Ma02b} introduced the notion of multiplier
Hermitian-Einstein metrics which include K\"{a}hler-Ricci solitons and
Mabuchi solitons as special cases.
In this paper we shall focus on multiplier Hermitian-Einstein metrics
(see also \cite{HaLi20} for generalized K\"{a}hler-Ricci solitons
studied very recently by Han and Li).
Let $M$ be an $n$-dimensional Fano manifold.
We fix a holomorphic vector field $V$ on $M$ and a
$V_{\mathrm{Im}}$-invariant K\"{a}hler metric $\omega_{0}\in 2\pi
c_{1}(M)$,
where $V_{\mathrm{Im}}=\frac{1}{2\sqrt{-1}}(V-\overline{V})$ is the
imaginary part of $V$. Let 
$$
\mathcal{K}=\set{\omega_{\varphi}:=\omega_{0}+\deldel\varphi\,|\,
\varphi\in{C^{\infty}(M)_{\mathbb{R}}} \quad\text{and}\quad
\omega_{\varphi}>0}
$$
be the set of all K\"{a}hler metrics in $2\pi c_{1}(M)$, and put
$\mathcal{K}_{V}=\set{\omega\in\mathcal{K}\,|\,
L_{V_{\mathrm{Im}}}\omega=0}$.
Here $C^{\infty}(M)_{\mathbb{R}}$ denotes the set of real valued smooth
functions on $M$.
For each $\omega\in\mathcal{K}_{V}$,
there exists a unique real valued function
$\theta_{V}^{(\omega)}\in C^{\infty}(M)_{\mathbb{R}}$ on $M$ such that
\begin{equation}\label{theta-normalization}
i_{V}\omega=\sqrt{-1}\overline{\partial}\theta_{V}^{(\omega)}
\quad\text{and}\quad \int_{X}\theta_{V}^{(\omega)}\omega^{n}=0,
\end{equation}
where $i_{V}\omega$ is the interior product of $V$ and $\omega$.
According to \cite{FM}, two real numbers
$\min_{M}\theta_{V}^{(\omega)}$ and $\max_{M}\theta_{V}^{(\omega)}$ are
independent of the choice of $\omega\in\mathcal{K}_{V}$.
Let $\sigma(s)$ be a real-valued smooth function on the interval
$I=(\alpha,\beta)\,
(-\infty\leqq\alpha<\min_{M}\theta_{V}^{(\omega)}
\leqq\max_{M}\theta_{V}^{(\omega)}<\beta\leqq+\infty)$
satisfying one of the following conditions:
(i) $\dot{\sigma}\leqq{0}\leqq\ddot{\sigma}$, 
(ii) $\ddot{\sigma} > 0$,
where $\dot{\sigma}$ and $\ddot{\sigma}$ are the first derivative and
the second derivative respectively.
We consider the Hermitian form
$$
\widetilde{\omega}:=
\omega\exp\left(-\frac{1}{n}\sigma(\theta_{V}^{(\omega)})\right).
$$
Without loss of generality, we can also assume
$\int_{M}\exp
(-\sigma(\theta_{V}^{(\omega)}))\omega^{n}=\int_{M}\omega^{n}$.
Mabuchi~\cite{Ma03, Ma02a, Ma02b} 
called a conformally K\"{a}hler metric
$\widetilde{\omega}$  a {\it multiplier Hermitian metric $($of type
$(\sigma,V)$$)$}.
Note that the multiplier Hermitian metric $\widetilde{\omega}$ can be
seen as an Hermitian metric on the holomorphic tangent bundle $TM$.
Then $\widetilde{\omega}$ defines the Hermitian connection
$$
\widetilde{\nabla}:=\nabla -\frac{\partial
(\sigma(\theta_{V}^{(\omega)}))}{n} \mathrm{id}_{TM},
$$
where $\nabla$ is the natural connection with respect to $\omega$.
The Ricci form $\mathrm{Ric}^{\sigma}_{V}(\omega)$ of
$(\widetilde{\omega}, \widetilde{\nabla})$ is equal to
$\mathrm{Ric}(\omega)+\deldel\sigma(\theta_{V}^{(\omega)})$,
where $\mathrm{Ric}(\omega)\in{2\pi}c_{1}(M)$ is the Ricci form for
$\omega$ defined by $-\deldel\log\omega^{n}$.
\begin{definition}
The conformally K\"{a}hler metric $\widetilde{\omega}$ is
a {\it  multiplier Hermitian-Einstein metric $($of type $(\sigma,V)$$)$}
if $\mathrm{Ric}^{\sigma}_{V}(\omega)=\omega$.
\end{definition}
Define the Ricci potential $\rho_{\omega}$ for $\omega$ as follows.
\begin{equation}\label{Ricci}
\mathrm{Ric}(\omega)-\omega=\deldel\rho_{\omega} \quad\text{and}\quad
\int_{X}(1-e^{\rho_{\omega}})\omega^{n}=0.
\end{equation}
In this terminology, $\widetilde{\omega}$ is multiplier
Hermitian-Einstein if and only if
$\rho_{\omega}+\sigma(\theta_{V}^{(\omega)})=0$.

Multiplier Hermitian-Einstein metrics of type $(\sigma,V)$ give
some well-known generalizations of K\"{a}hler-Einstein metrics. 
\begin{enumerate}
\item[(i)] When $\sigma$ is a constant function, 
a multiplier Hermitian-Einstein metric $\widetilde{\omega}$ gives an
K\"{a}hler-Einstein metric.
\item[(ii)] When $\sigma(s) =-s+C$, where $C$ is a constant,
the metric $\widetilde{\omega}$ gives {\it a K\"{a}hler-Ricci soliton}
in the sense that $\mathrm{Ric}(\omega)-\omega=L_{V}\omega$.
\item[(iii)] When $\sigma(s)=-\log(s+C)$, where $C$ is a constant
strictly greater than $\min_{M}\theta_{V}^{(\omega)}$, the metric
$\widetilde{\omega}$ gives a {\it Mabuchi soliton} (\cite{Hi19-2, Ma01, Ya21})
in the sense that $1-e^{\rho_{\omega}}$ is a potential function of a
holomorphic vector field with respect to $\omega$. 
In this case the positivity of
$\exp(-\frac{1}{n}\sigma(\theta_{V}^{(\omega)}))$ plays an important
role of the existence for Mabuchi solitons on toric Fano manifolds
(See \cite{Na19, Ya21}).
\end{enumerate}

As well as the theory of K\"{a}hler-Einstein metrics, multiplier
Hermitian-Einstein metrics do not always exist.
Let $\mathrm{Aut}(M)$ be the holomorphic automorphism group of $M$.
Futaki~\cite{Fu02} (see also \cite{QLi07}) introduced the character
$$
\mathrm{Fut}_{V}^{\sigma}(X):=\int_{M}X
\left(\rho_{\omega}+\sigma(\theta_{V}^{(\omega)})\right)
e^{-\sigma(\theta_{V}^{(\omega)})}\omega^{n}
$$
on the Lie algebra of the subgroup of $\mathrm{Aut}(M)$ consisting of
all elements $g$ such that $\mathrm{Ad}(g)V=V$,
and showed that the value $\mathrm{Fut}_{V}^{\sigma}(X)$ is independent
of the choice of $\omega$.
In this paper we call the invariant $\mathrm{Fut}_{V}^{\sigma}$ the
{\it $(\sigma,V)$-Futaki invariant}.
In particular, if $M$ admits a multiplier Hermitian-Einstein metric of
type $(\sigma,V)$
then the $(\sigma,V)$-Futaki invariant must vanish identically.
Thus the existence for multiplier Hermitian-Einstein metrics is
non-trivial.

Integrating the $(\sigma,V)$-Futaki invariant, we get a functional on
$\mathcal{K}_{V}$,
called the {\it $(\sigma,V)$-Ding functional} in this paper.
More explicitly, the $(\sigma,V)$-Ding functional is defined by
$$
\mathrm{Ding}_{V}^{\sigma}(\varphi):=-E_{V}^{\sigma}(\varphi)
-\log\left(\frac{1}{\int_{M}\omega_{0}^{n}}
\int_{M}e^{\rho_{\omega_{\varphi}}-\varphi}\omega_{0}^{n}
\right)
$$
for $\omega_{\varphi}=\omega_{0}+\deldel\varphi\in\mathcal{K}_{V}$
(see also \cite{QLi07}),
where $E_{V}^{\sigma}(\varphi)$ is the modified Monge-Amp\`{e}re energy
(\cite{BW14}) defined by the derivative
$$
\delta E_{V}^{\sigma}(\delta\varphi):=\frac{1}{\int_{M}\omega_{0}^{n}}
\int_{M}\delta\varphi
e^{-\sigma(\theta_{V}^{(\omega_{\varphi})})}\omega_{\varphi}^{n}
$$
and the normalization $E_{V}^{\sigma}(0)=0$.
It is easy to see that a critical point of $\mathrm{Ding}_{V}^{\sigma}$
defines a multiplier Hermitian-Einstein metric.
We shall see the $(\sigma,V)$-Ding functional plays an important role
for our results.

Now we state our motivation for our results.
In this paper, we also focus on {\it KSM-manifolds} 
which are Fano manifolds having the structure of certain toric bundle.
This notion was introduced by the first author \cite{Nakg19, Nakg15}.
See Section~\ref{KSM-mfd} for details about KSM-manifolds.
One of the typical examples for KSM-manifolds is a projective bundle
$\mathbb{P}(L\oplus\mathcal{O}_{W})$
where $L\to W$ is a line bundle over a Fano K\"{a}hler-Einstein manifold
$W$ and $\mathcal{O}_{W}$ is the trivial line bundle over $W$.
Such manifold was discussed to establish a characterization of the
existence for a non-homogeneous K\"{a}hler-Einstein metric or a
K\"{a}hler-Ricci soliton
in \cite{Koi90, KoSa86, KoSa88, Ma87, Sak86} in late 1980's. 
These results are stated in Section~\ref{MetricKSM}. 
The main purpose of this paper is to generalize
these results from view points of
multiplier Hermitian-Einstein metrics, general KSM-manifolds 
and both an algebraic and an analytic stability condition associated
with the $(\sigma,V)$-Ding functional.

We state our main theorems.
Let $\mathfrak{M}=(W;L_{1},L_{2},\dotsc,L_{l};P)$ be an
$(n,l)$-dimensional KSM-data and $Z_{\mathfrak{M}}$ be the associated
KSM-manifold.
By definition, $\Z$ has the structure of a fiber bundle over an
$n$-dimensional
Fano K\"{a}hler-Einstein manifold $W$
whose fiber is the $l$-dimensional toric Fano manifold associated with
an $l$-dimensional Fano polytope $P$.
Let $V$ be a holomorphic vector field on $Z_{\mathfrak{M}}$ as above.
In the following theorems, we assume $V$ is {\it fiber-directed}.
Namely, $V$ is assumed to be a holomorphic vector field induced by the
natural (fiber-directed) $\left(\mathbb{C}^{*}\right)^{l}$-action on
$Z_{\mathfrak{M}}$.

\begin{theorem}\label{ThmA}
{\rm (Theorem~\ref{ThmA2})}
Suppose a holomorphic vector field $V$ on a KSM-manifold $\Z$ is
fiber-directed.
Then $Z_{\mathfrak{M}}$
admits a multiplier Hermitian-Einstein metric
$$
\widetilde{\omega}=
\omega\exp\left(\frac{-1}{n+l}\sigma(\theta_{V}^{(\omega)})\right),
$$
if and only if for any $k=1,2,\dotsc, l$, we have
\begin{equation}\label{barycenter}
\int_{P^{*}}z_{k}\prod_{\alpha=1}^{n}(1+\langle\bm{\mu}_{\alpha},\bm{z}
\rangle)e^{-\sigma(\theta_{V}^{(\omega)})}\bm{dz}=0.
\end{equation}
Here the potential function $\theta_{V}^{(\omega)}$ is written as
$-\sum_{k=1}^{l}c_{k}z_{k}+C_{V}$ on the dual polytope $P^{*}$ of a Fano
polytope $P$,
where $z_{k}$ is the standard coordinate on
$P^{*}\subset\mathbb{R}^{l}$, and $c_{k}$ and $C_{V}$ are constants
uniquely determined by $(Z_{\mathfrak{M}},V)$.
\end{theorem}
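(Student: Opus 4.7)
The plan is to exploit the toric-bundle structure of $Z_\mathfrak{M}$ to reduce the multiplier Hermitian-Einstein equation to a real Monge--Amp\`{e}re equation on the dual polytope $P^{*}$, and then use the $(\sigma,V)$-Ding functional to characterize solvability by the barycenter condition \eqref{barycenter}.

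First I would set up the Calabi-type ansatz for KSM-manifolds. Since $V$ is fiber-directed and $W$ is Fano K\"{a}hler--Einstein, $V$-invariant K\"{a}hler metrics $\omega\in\mathcal{K}_{V}$ can be constructed from a fiberwise strictly convex function $u$ on the dual polytope $P^{*}$ together with the fixed K\"{a}hler--Einstein metric $\omega_{W}$ on $W$. A standard computation (using $c_{1}(L_{\alpha})$ together with the $\bm{\mu}_{\alpha}$'s encoded in the KSM-data) shows that under the Legendre transform the volume form of such $\omega$ over a fiber picks up exactly the factor $\prod_{\alpha=1}^{n}(1+\langle\bm{\mu}_{\alpha},\bm{z}\rangle)$, and that the moment-map image of $V$ is the affine function $-\sum_{k=1}^{l}c_{k}z_{k}+C_{V}$ on $P^{*}$, so that the normalization \eqref{theta-normalization} pins down the constants $c_{k}$ and $C_{V}$ in terms of $\mathfrak{M}$.

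Next I would prove the necessity of \eqref{barycenter}. Applying the Futaki character $\mathrm{Fut}_{V}^{\sigma}$ to the generators $V_{k}$ of the fiberwise $(\mathbb{C}^{*})^{l}$-action and unwinding the integral on $Z_\mathfrak{M}$ via the Fubini/moment-map reduction (using that the fibers are toric with moment polytope $P^{*}$ and base is K\"{a}hler--Einstein), $\mathrm{Fut}_{V}^{\sigma}(V_{k})$ becomes, up to a nonzero constant, the integral on the left of \eqref{barycenter}. Since the existence of a multiplier Hermitian-Einstein metric forces $\mathrm{Fut}_{V}^{\sigma}\equiv 0$ on the Lie algebra commuting with $V$, and $V_{1},\dotsc,V_{l}$ lie in this algebra, the barycenter identities follow for every $k$.

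For sufficiency (the main obstacle), I would reduce the $(\sigma,V)$-Ding functional on $\mathcal{K}_{V}$ to a functional on convex functions $u$ on $P^{*}$ via the above ansatz: the modified Monge--Amp\`{e}re energy $E_{V}^{\sigma}$ becomes a linear-plus-integral expression in the Legendre dual, while the log-term becomes a relative entropy of $e^{-\sigma}$ against the reference measure $\prod_{\alpha}(1+\langle\bm{\mu}_{\alpha},\bm{z}\rangle)\bm{dz}$. Under the barycenter hypothesis \eqref{barycenter}, the linear part is invariant under the translation action on $P^{*}$ corresponding to the reductive automorphisms of the fibers, and then one shows (following the polytope-level properness arguments of Wang--Zhu and their refinements for Mabuchi/generalized solitons) that the reduced Ding functional is proper modulo these translations. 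A minimizer therefore exists in a suitable finite-energy completion; standard elliptic regularity for the resulting real Monge--Amp\`{e}re equation on $P^{*}$, together with the positivity/convexity condition (i) or (ii) on $\sigma$, upgrades it to a smooth convex solution, which lifts back to a smooth multiplier Hermitian-Einstein metric on $Z_\mathfrak{M}$.

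The hardest step will be establishing properness of the reduced Ding functional under \eqref{barycenter}: one must track precisely how $\sigma$ interacts with the $\prod_{\alpha}(1+\langle\bm{\mu}_{\alpha},\bm{z}\rangle)$ weight at the boundary of $P^{*}$, and handle the non-convex case (i) of $\sigma$ where the standard convexity argument for the log-term must be supplemented with the asymptotic behavior of the affine part. Everything else (the ansatz, the Futaki computation, and the regularity theory) is a routine, if technical, combination of the KSM-data formalism and standard toric techniques.
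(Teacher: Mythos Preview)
Your outline is essentially the paper's own argument. The paper proves Theorem~\ref{ThmA} as Theorem~\ref{ThmA2}: necessity comes from evaluating $\mathrm{Fut}_{V}^{\sigma}$ on the fiberwise torus generators $v_{k}$ (after a short lemma rewriting the invariant as $-\int_{M}(\theta_{v}^{(\omega)}-c_{v})e^{-\sigma(\theta_{V}^{(\omega)})}\omega^{n+l}$ and identifying $c_{v_{k}}$), and sufficiency comes from the same Calabi-type ansatz you describe, reducing the multiplier Hermitian--Einstein equation to the real Monge--Amp\`{e}re equation $g(\nabla u)\det(\nabla^{2}u)=e^{-u}$ on $\mathbb{R}^{l}$ with $g(\bm{z})=\prod_{\alpha}(1+\langle\bm{\mu}_{\alpha},\bm{z}\rangle)e^{-\sigma(-\langle\bm{c},\bm{z}\rangle+C_{V})}$.

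The only substantive difference is organizational: where you propose to prove properness of the reduced $(\sigma,V)$-Ding functional from scratch (Wang--Zhu style), the paper simply invokes Berman--Berndtsson \cite[Theorem~1.1]{BB13}, whose hypothesis is exactly that the barycenter of $P^{*}$ with respect to $g(\bm{z})\bm{dz}$ is the origin, i.e.\ \eqref{barycenter}. The paper does carry out the coercivity argument you sketch, but separately, as its Theorem~\ref{ThmC}. Two small imprecisions in your write-up to fix when you expand it: the convex potentials $u$ live on $\mathbb{R}^{l}$ (it is their Legendre duals that live on $P^{*}$), and the symmetry one quotients by is addition of affine functions to $u^{*}$ (equivalently translation of $u$ on $\mathbb{R}^{l}$), not a ``translation action on $P^{*}$''; also the log-term $\log\int_{\mathbb{R}^{l}}e^{-u}$ is not a relative entropy.
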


We will further show that the integral condition in the above theorem is
equivalent to
the vanishing of the $(\sigma,V)$-Futaki invariant for fiber-directed
holomorphic vector fields.
Theorem~\ref{ThmA} generalizes the results in
\cite{Koi90, KoSa86, KoSa88, Ma87, Nakg19, Sak86}
stated in Section~\ref{MetricKSM}
from view points of general KSM-data and multiplier Hermitian-Einstein
metrics.
The existence for K\"{a}hler-Einstein metrics and K\"{a}hler-Ricci
solitons on homogeneous KSM-manifolds were studied in
\cite{De20, PS10}. 
More generally, multiplier Hermitian-Einstein metrics 
on homogeneous KSM-manifolds were studied in \cite{DH21}.
These works are based on the structure of a homogeneous toric bundle,
and depends on the representation theory of Lie groups.
On the other hand, since our argument depends on convex analysis and
combinatorics for toric geometry,
we can also deal with non-homogeneous KSM-manifolds.
See Section~\ref{Example} for an explicit example of a non-homogeneous
KSM-manifold.

\begin{remark}
It is not known whether we can relax the assumption that $V$ is
fiber-directed.
However, we believe that the extremal vector field for a KSM manifold
must be fiber-directed, for example.
\end{remark}

The following theorem is an algebraic stability version of
Theorem~\ref{ThmA}.

\begin{theorem}\label{ThmB}
Suppose a holomorphic vector field $V$ on a KSM-manifold $\Z$ is
fiber-directed.
Then $\Z$ is fiber-directed relative $(\sigma,V)$-D-polystable
if and only if the integral condition \eqref{barycenter} holds.
\end{theorem}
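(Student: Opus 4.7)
The plan is to translate fibre-directed relative $(\sigma,V)$-D-polystability into a convex-analytic statement on the dual polytope $P^{*}$, in the spirit of the Wang--Zhu and Berman--Berndtsson treatment of the toric Fano case, adapted to the fibre bundle structure of $\Z$ and to the $\sigma$-twisted Ding functional. The weighted measure
$d\nu:=\prod_{\alpha=1}^{n}(1+\langle\bm{\mu}_{\alpha},\bm{z}\rangle)\,e^{-\sigma(\theta_{V}^{(\omega)})}\bm{dz}$
on $P^{*}$ appearing in \eqref{barycenter} will play the role of Lebesgue measure in the toric Fano story: the polynomial factor is the Duistermaat--Heckman weight coming from fibre-integration along the Fano K\"{a}hler--Einstein base $W$ (already used in Theorem~\ref{ThmA}), while the exponential factor encodes the $\sigma$-twist.

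First I would set up a dictionary between fibre-directed $\mathbb{C}^{*}$-equivariant normal test configurations of $\Z$ and rational piecewise-linear convex functions $u$ on $P^{*}$, under which product test configurations correspond to affine $u$. Combining this with the $(\sigma,V)$-twisted Monge--Amp\`{e}re computations behind Theorem~\ref{ThmA}, I would derive a closed-form expression for the non-Archimedean $(\sigma,V)$-Ding invariant $\mathrm{Ding}^{\mathrm{NA},\sigma}_{V}(u)$ as a difference between an ``$L$-functional'' $L^{\mathrm{NA}}(u)$ (non-Archimedean log-canonical threshold along the TC) and a $(\sigma,V)$-weighted Monge--Amp\`{e}re energy of the form $\int_{P^{*}}u\,d\nu / \int_{P^{*}}d\nu$. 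The crucial normalization is that $L^{\mathrm{NA}}$ vanishes on linear functions when evaluated at the origin (equivalently, product TCs contribute only via their $d\nu$-mean), so that in the relative theory the invariant on an affine $u$ depends only on the $d\nu$-mean of its linear part.

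With this formula in hand, both implications of the theorem become accessible. For ``$\Rightarrow$'', testing fibre-directed relative polystability against the pair $u=\pm z_{k}$ (which are affine and hence correspond to product test configurations) gives the inequalities $\mp\int_{P^{*}}z_{k}\,d\nu\geq 0$, pinching $\int_{P^{*}}z_{k}\,d\nu=0$ for every $k=1,\dotsc,l$; this is precisely \eqref{barycenter}. For ``$\Leftarrow$'', condition \eqref{barycenter} makes the weighted barycenter coincide with the origin, so that the $d\nu$-mean of every linear function vanishes; together with the concavity of $L^{\mathrm{NA}}$ (a non-Archimedean form of the $L$-comparison inequality), this forces $\mathrm{Ding}^{\mathrm{NA},\sigma}_{V}(u)\geq 0$ for every convex $u$, with equality iff $u$ is affine, i.e.\ iff the test configuration is product. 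This last convex-analytic step is essentially the same Jensen-type estimate that underlies the toric Fano case.

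The main obstacle is the first step --- a careful derivation of the closed-form formula for the non-Archimedean $(\sigma,V)$-Ding invariant on a KSM-manifold. One must fibre-integrate the non-Archimedean $(\sigma,V)$-Monge--Amp\`{e}re energy and the $L$-functional over the Fano K\"{a}hler--Einstein base $W$, and verify that the base contribution collapses to the Duistermaat--Heckman polynomial $\prod_{\alpha=1}^{n}(1+\langle\bm{\mu}_{\alpha},\bm{z}\rangle)$, matching the Archimedean calculation behind Theorem~\ref{ThmA}. This requires merging the KSM combinatorics of Section~\ref{KSM-mfd} with the $\sigma$-twisted non-Archimedean Berman--Berndtsson framework in the fibre-directed relative setting; once that calculation is in place, the equivalence reduces to the convex-analytic argument above.
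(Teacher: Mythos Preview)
Your proposal is correct at its core and lands on the same two-line convex-analytic argument the paper uses: test the Ding invariant against $\phi=\pm z_{k}$ to force the barycenter condition, and conversely apply Jensen's inequality for the convex function $\phi$ against the probability measure $g\,\bm{dz}/|P^{*}|_{V}^{\sigma}$ to get $\mathfrak{D}_{V}^{\sigma}(\phi)\geqq 0$ with equality exactly for affine $\phi$. Where you differ is in the route to the formula
\[
\mathfrak{D}_{V}^{\sigma}(\phi)=\frac{1}{|P^{*}|_{V}^{\sigma}}\int_{P^{*}}\phi(\bm{z})g(\bm{z})\,\bm{dz}-\phi(\bm{0}).
\]
You propose to obtain it by a non-Archimedean fibre-integration (collapsing the base contribution to the Duistermaat--Heckman polynomial and identifying $L^{\mathrm{NA}}(\phi)=-\phi(\bm{0})$), which would work but is heavier than needed. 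The paper instead \emph{defines} $\mathfrak{D}_{V}^{\sigma}$ by this expression, after computing in Proposition~\ref{Dslope} that it equals the asymptotic slope of the $(\sigma,V)$-Ding functional $D_{V}^{\sigma}$ along the toric geodesic ray associated to $(P,\phi,R)$; the energy term is immediate and the $-\phi(\bm{0})$ term comes from the known slope of the log term (Yao's computation). So what you flag as ``the main obstacle'' is bypassed in the paper by working Archimedeanly with geodesic slopes rather than non-Archimedeanly. One small remark: the inequality step is not really ``concavity of $L^{\mathrm{NA}}$'' (which is linear here) but precisely Jensen for $\phi$, as you yourself note at the end.
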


The fiber-directed relative $(\sigma,V)$-D-polystability is defined by
observing the asymptotic slope of the $(\sigma,V)$-Ding functional along
geodesics
associated with fiber-directed toric test configurations.
See Section~\ref{toricgeod}.
We emphasize that we do not use multiplier Hermitian-Einstein metrics to
prove Theorem~\ref{ThmB}.
Thus, our argument gives another interpretation for the known results
stated in Section \ref{MetricKSM}
from view point of the fiber-directed relative
$(\sigma,V)$-D-polystability.

The following theorem, which is not used multiplier Hermitian-Einstein
metrics to prove,
is an analytic stability version of Theorem~\ref{ThmA}.

\begin{theorem}\label{ThmC}
Suppose a holomorphic vector field $V$ on a KSM-manifold $\Z$ is
fiber-directed.
Then the $(\sigma,V)$-Ding functional
$D_{V}^{\sigma}$ on $\Z$
$($see \eqref{ding-KSM} for the definition of $D_{V}^{\sigma}$$)$
is coercive
if and only if the integral condition \eqref{barycenter} holds.
$($See Definition~\ref{def:coercive} for the definition of the
coercivity for $D_{V}^{\sigma}$.$)$
\end{theorem}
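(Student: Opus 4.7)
The plan is to reduce the coercivity of $D_{V}^{\sigma}$ to a weighted convex-analytic problem on the dual polytope $P^{*}$ by exploiting the fiber-torus symmetry of $\Z$, and then to run a weighted Wang--Zhu / Berman--Berndtsson coercivity argument in the spirit of toric K\"ahler geometry.

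The first step is to restrict $D_{V}^{\sigma}$ to the subspace of $V_{\mathrm{Im}}$-invariant potentials that are additionally invariant under the fiber torus $T^{l}\subset(\mathbb{C}^{*})^{l}$. Via the Legendre transform, such potentials are parametrized by convex functions $u$ on $P^{*}$ (in a class such as $\bPSH$ or $\Eone$). A straightforward computation then yields the decomposition
\[
D_{V}^{\sigma}(u) = -\mathcal{E}_{V}^{\sigma}(u) - \mathcal{L}_{V}^{\sigma}(u),
\]
in which $\mathcal{E}_{V}^{\sigma}(u) = \Pvol^{-1}\!\int_{P^{*}} u\, d\mu_{V}^{\sigma}$ is linear in $u$, with weighted reference measure
\[
d\mu_{V}^{\sigma}(z) = \prod_{\alpha=1}^{n}(1+\langle\bm{\mu}_{\alpha},\bm{z}\rangle)\, e^{-\sigma(\theta_{V}^{(\omega)}(z))}\, \bm{dz}
\]
(the $\bm{\mu}_{\alpha}$ are contributed by the base Fano-KE factor $W$), and $\mathcal{L}_{V}^{\sigma}(u)$ is a log-integral Moser--Trudinger-type term arising from $\log\int e^{\rho-\varphi}\omega_{0}^{n}$. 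The integral condition \eqref{barycenter} is then precisely the statement that all first moments of $d\mu_{V}^{\sigma}$ vanish, i.e.\ that $\mathcal{E}_{V}^{\sigma}$ annihilates every affine function on $P^{*}$.

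For the ``if'' direction I assume \eqref{barycenter} and normalize $u$ by $\mathcal{E}_{V}^{\sigma}(u)=0$, which is harmless for the coercivity inequality precisely because affine functions have been killed. Under this normalization the reduced $J$-functional $\Jfct(u)$ is comparable to $\sup_{P^{*}}u - \inf_{P^{*}}u$, and the coercivity reduces to an estimate of the form
\[
-\mathcal{L}_{V}^{\sigma}(u) \geq \epsilon\, \Jfct(u) - C,
\]
which follows from a weighted Jensen / Moser--Trudinger inequality on $P^{*}$: since the bulk of $d\mu_{V}^{\sigma}$ is supported away from $\partial P^{*}$, a convex $u$ with $\mathcal{E}_{V}^{\sigma}(u)=0$ and large sup-norm must force a large contribution to the log-integral. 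For the ``only if'' direction I argue contrapositively: if $\lambda_{k} := \Pvol^{-1}\!\int_{P^{*}} z_{k}\, d\mu_{V}^{\sigma} \neq 0$ for some $k$, then the asymptotic slope comparison along a sequence of convex functions approximating $-\mathrm{sgn}(\lambda_{k})\,t\,z_{k}$ gives $\limsup_{t\to\infty}D_{V}^{\sigma}(u_{t})/\Jfct(u_{t}) < 0$, contradicting coercivity.

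The principal obstacle, I expect, is establishing the weighted convex-analytic estimate in the ``if'' direction with uniform constants: the weight $e^{-\sigma}$ may degenerate near $\partial P^{*}$ in the Mabuchi-soliton regime ($\sigma(s)=-\log(s+C)$), and the two admissible regimes (i) $\dot{\sigma}\leq 0\leq\ddot{\sigma}$ and (ii) $\ddot{\sigma}>0$ require slightly different treatment, with case (i) allowing the cleaner Wang--Zhu-style argument and case (ii) needing a refined convexity/entropy input. A secondary technical step is to verify that coercivity on the reduced class of $T^{l}$-invariant potentials implies coercivity on $\mathcal{K}_{V}$ in the sense of Definition~\ref{def:coercive}, which should follow by averaging over $T^{l}$ using the $T^{l}$-equivariance of $E_{V}^{\sigma}$ and Jensen applied to the log-integral term.
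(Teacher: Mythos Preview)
Your overall strategy---reduce to convex analysis on $P^{*}$, normalize modulo affine functions, prove a weighted Moser--Trudinger-type bound for the ``if'' direction, and destabilize along linear functions for the ``only if'' direction---is correct and is essentially what the paper does. However, you are manufacturing obstacles that do not exist, and the place you expect the difficulty to lie is not where the actual work is.

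The ``principal obstacle'' you flag is illusory. In the standing hypotheses of Section~\ref{Intro}, $\sigma$ is smooth on an \emph{open} interval $(\alpha,\beta)$ with $\alpha<\min_{M}\theta_{V}^{(\omega)}\leqq\max_{M}\theta_{V}^{(\omega)}<\beta$. Hence $\exp\!\big(-\sigma(-\langle\bm{c},\bm{z}\rangle+C_{V})\big)$ is uniformly bounded above and below by positive constants on $P^{*}$; this is precisely the estimate \eqref{positivity}. There is no degeneration near $\partial P^{*}$, and cases (i) and (ii) for $\sigma$ require no separate treatment. (The degenerate case $\alpha=\min_{M}\theta_{V}^{(\omega)}$ is handled only later, in Section~\ref{NUStable}, and gives a weaker conclusion.) The paper exploits \eqref{positivity} to sandwich $g$ between constant multiples of the unweighted density $h$, obtaining the comparison \eqref{J-relation1} and Lemma~\ref{J-relation2}; the weighted Moser--Trudinger step then reduces to an unweighted one already in the literature (\cite[Proposition~4.2]{Na19}).

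Your ``secondary technical step''---passing from $T^{l}$-invariant potentials to all of $\mathcal{K}_{V}$---is also unnecessary: Definition~\ref{def:coercive} formulates coercivity directly on $\Eone$, so there is nothing to lift.

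Finally, your normalization ``$\mathcal{E}_{V}^{\sigma}(u)=0$'' only fixes the constant part of $u^{*}$, not its linear part, so your claim that $\Jfct(u)$ becomes comparable to $\sup u-\inf u$ does not yet follow. The paper's normalization is cleaner: pick $\bm{a}\in\partial u^{*}(\bm{0})$ and replace $u^{*}$ by $\widetilde{u}^{*}(\bm{z}):=u^{*}(\bm{z})-\langle\bm{a},\bm{z}\rangle-u^{*}(\bm{0})$, so that $\widetilde{u}^{*}\geqq\widetilde{u}^{*}(\bm{0})=0$. Since $\bm{b}_{g}=\bm{0}$ this leaves $D_{V}^{\sigma}$ unchanged, and Lemma~\ref{J-relation2} then gives the comparison with $\JfctORG$ directly. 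Your ``only if'' argument is the same as the paper's: testing with $\ell(\bm{z})=-ra_{k}z_{k}$ and sending $r\to\infty$ drives $D_{V}^{\sigma}$ to $-\infty$ while $\JfctORG\geqq0$.
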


Han-Li \cite{HaLi20} established the equivalence among the existence
for a multiplier Hermitian-Einstein metric,
an algebraic stability condition for general Fano manifolds
and the coercivity for an energy functional.
In our cases,
by focusing on KSM-manifolds, we can prove Theorems~\ref{ThmA} and
\ref{ThmB}
by a method of  toric geometry which is independent of Han-Li's
argument.
We also mention that
Apostolov-Jubert-Lahdili \cite{AJL21} 
(see also \cite{DJ22} written by Delcroix-Jubert) discuss
multiplier Hermitian-Einstein metrics on semisimple principal toric
fibrations as an application of Han-Li's work.

The structure of this paper is as follows.
In Section~\ref{KSM-mfd}, we review about the notion of KSM-manifolds
and the known results on the existence for 
canonical K\"ahler metrics
on KSM-manifolds.
In Section~\ref{section:Futaki}, the $(\sigma, V)$-Futaki invariant
is discussed to obtain Theorem~\ref{ThmA}.
The algebraic and analytic stability is discussed in
Section~\ref{section:stability}
and \ref{section:coercive} to prove Theorems~\ref{ThmB} and \ref{ThmC}
respectively.
In Section~\ref{NUStable},
inspired by Yao's work \cite[Sections~6 and 7]{Ya21},
we discuss the non-uniformly stable case.
In this case we have
$\exp(-\frac{1}{n}\sigma(\theta_{V}^{(\omega)})) = 0$
at some point of a manifold.
Although there does not necessarily exist a multiplier
Hermitian-Einstein metric,
we construct a solution of an equation for the Monge-Amp\`{e}re measure
in the sense of Alexandrov
under an additional assumption for  $\sigma$ on non-uniform stable
KSM-manifolds.
Finally, in Section~\ref{section:example}, we construct an interesting
example of a KSM-manifold satisfying the integral condition
\eqref{barycenter}
by using a continuous path between a K\"{a}hler-Ricci soliton and a
Mabuchi soliton.
Moreover we see that the same idea of this construction yields a
non-uniformly stable KSM-manifold.

\subsection*{Acknowledgments}
The authors would like to thank Thibaut Delcroix for helpful comments.
They also thank the referee for numerous useful remarks and suggestions 
which improved the presentation of the paper.
\section{KSM-manifolds}\label{KSM-mfd}

The notion of KSM-manifolds was introduced by the first author
\cite{Nakg19, Nakg15}.
In this section we review definition, properties and examples for
KSM-manifolds.
We also review known results on the existence for canonical K\"{a}hler
metrics on KSM-manifolds.
The main reference in this section is the article \cite{Nakg19} written
by the first author.

We should firstly mention that the name of ``KSM'' comes from three
initials, Koiso, Sakane and Mabuchi.
They studied non-homogeneous K\"{a}hler-Einstein metrics in
\cite{Koi90, KoSa86, KoSa88, Sak86}
and toric K\"{a}hler-Einstein metrics in \cite{Ma87} in late 1980's.
Each study is one of the pioneering works in the theory of
K\"{a}hler-Einstein metrics on Fano manifolds.

\subsection{Toric Fano manifolds}

It is well-known that there is a one to one correspondence between toric
Fano manifolds and Fano polytopes
(see for instance \cite{CLSbook, Odbook}).
A convex polytope $P$ in $\mathbb{R}^{l}$ is called an $l$-dimensional
Fano polytope if it satisfies the following properties:
\begin{enumerate}
\item[(i)] $P$ is an integrable polytope, that is, the set
	   $\mathcal{V}(P)$ of vertices of $P$ is contained in
	   $\mathbb{Z}^{l}$.
\item[(ii)] The origin $\bm{0}$ is contained in the interior
	    $\mathrm{Int}(P)$ of $P$.
\item[(iii)] $P$ is a simplicial polytope, that is, each facet
	     (i.e. codimension one face) of $P$ is a simplex;.
\item[(iv)] For any facet of $P$, the set of its vertices
            $\{\bm{b}_{1},\dotsc,\bm{b}_{l}\}$ forms a
            $\mathbb{Z}$-basis of $\mathbb{Z}^{l}$.
\end{enumerate}
For an $l$-dimensional Fano polytope $P$, its dual polytope $P^{*}$ is
defined as
$$
P^{*}:=
\Set{\bm{z}\in\mathbb{R}^{l}\,|\,\langle\bm{z},\bm{y}\rangle\leqq{1}
\quad\text{for any}\quad\bm{y}\in P},
$$
where $\langle\cdot, \cdot\rangle$ is the standard inner product for
$\mathbb{R}^{l}$. Let
$P^{*}\cap\mathbb{Z}^{l}=\{\bm{a}_{0},\bm{a}_{1},\dotsc,\bm{a}_{N}\}$ be
the set of lattice points in $P^{*}$.
Then the toric Fano manifold associated with $P$, which is expressed as
$X_P$, is constructed by the following way.
For $\bm{t}=(t_{1},\dotsc,t_{l})\in (\mathbb{C}^{*})^{l}$ 
and $\bm{a}=(a_{1},\dots,a_{l})\in\mathbb{Z}^{l}$, 
set
$$
\bm{t}^{\bm{a}}=t_{1}^{a_{1}}\dotsm{t_{l}^{a_{l}}}\in\mathbb{C}^{*},
$$
and define an injective holomorphic map
$\varphi_{P}:(\mathbb{C}^{*})^{l}\to\mathbb{P}^{N}(\mathbb{C})$ by
$$
\varphi_{P}(\bm{t}):=
[\bm{t}^{-a_{0}}:\bm{t}^{-a_{1}}:\dotsc:\bm{t}^{-a_{N}}].
$$
Then we have $X_{P}=\overline{\varphi_{P}((\mathbb{C}^{*})^{l})}$ 
where it is the closure of
$\varphi_{P}((\mathbb{C}^{*})^{l})$ in $\mathbb{P}^{N}(\mathbb{C})$.

\subsection{KSM-manifolds}

Now we introduce the notion of KSM-manifolds which is a Fano manifold
and has the structure of a toric fiber bundle over a Fano
Einstein-K\"{a}hler manifold.

\begin{definition}\label{KSM-data}
The tuple $\mathfrak{M}=(W;L_{1},\dotsc,L_{l};P) $ is called an
$(n,l)$-dimensional {\it KSM-data}
if the following four conditions are satisfied:
\begin{enumerate}
\item[(i)] $W$ is an $n$-dimensional Fano manifold with an
	   K\"{a}hler-Einstein metric $\nu_{0}$, that is, there exists a
	   K\"{a}hler form $\nu_{0}$ on $W$ satisfying
	   $\mathrm{Ric}(\nu_{0})=\nu_{0}$.
\item[(ii)] $L_{i}$ is a holomorphic line bundle over $W$ for each
	    $i=1,\dotsc,l$, admitting a Hermitian metric $h_{i}$ whose
	    curvature form $-\deldel\log h_{i}$ has constant eigenvalues
	    $\mu_{1}^{(i)},\dotsc,\mu_{n}^{(i)}$ with respect to
	    $\nu_{0}$.
\item[(iii)] For any $w\in W$, there exists a neighborhood $U$ of $w$,
	     a holomorphic local coordinate $(z_{1},\dotsc,z_{n})$ of
	     $U$ and a holomorphic local frame $e_{i}$ for $L_{i}|_{U}$
	     $(i=1,\dotsc,l)$ satisfying
\begin{align*}
&\nu_{0}=\sqrt{-1}\sum_{\alpha=1}^{n}
dz_{\alpha}\wedge d\overline{z_{\alpha}},\\
&h_{i}(e_{i},e_{i})=1,\\
&d(h_{i}(e_{i},e_{i}))=0,\\
&-\deldel\log h_{i}=\sqrt{-1}\sum_{\alpha=1}^{n}\mu_{\alpha}^{(i)}
dz_{\alpha}\wedge d\overline{z_{\alpha}}
\end{align*}
at $w$, simultaneously.
Here, for all $\alpha=1,\dotsc,n$, we put
$$
\bm{\mu}_{\alpha}:=(\mu_{\alpha}^{(1)},\dotsc,
\mu_{\alpha}^{(l)})\in\mathbb{R}^{l}
$$
which are called the curvature vectors of $\mathfrak{M}$.
\item[(iv)] $P$ is an $l$-dimensional Fano polytope satisfying
	    $-\bm{\mu}_{\alpha}\in\mathrm{Int}(P)$ for all
	    $\alpha=1,\dotsc,n$.
\end{enumerate}
\end{definition}

Fix an $(n,l)$-dimensional KSM-data
$\mathfrak{M}=(W;L_{1},\dotsc,L_{l};P)$
to define a KSM-manifold.
Let $\pi_{\mathfrak{M}}:Q_{\mathfrak{M}}\to W$ be the
$(\mathbb{C}^{*})^{l}$-bundle over $W$
associated with $L_{1}\oplus\dotsm\oplus L_{l}$.
Namely any element $\bm{q}\in Q_{\mathfrak{M}}$ satisfies
$\bm{q}=q_{1}\oplus\dotsm\oplus q_{l}$ and
$q_{i}\in L_{i}\setminus(\text{zero-section})$.
For $\bm{a}=(a_{1},\dotsc,a_{l})\in\mathbb{Z}^{l}$ and
$\bm{q}=q_{1}\oplus\dotsm\oplus q_{l}\in Q_{\mathfrak{M}}$, 
we put 
$$
\bm{q}^{\bm{a}}:=q_{1}^{\otimes a_{1}}\otimes\dotsm\otimes
q_{l}^{\otimes a_{l}}\in \bm{L}^{\bm{a}}\setminus\text{(zero-section)},
$$
where
$\bm{L}^{\bm{a}}:=L_{1}^{\otimes a_{1}}\otimes\dotsm\otimes
L_{l}^{\otimes a_{l}}$,
and also put
$$
E_{\mathfrak{M}}:=
\bigoplus_{\bm{a}\in P^{*}\cap\mathbb{Z}^{l}} \bm{L}^{-\bm{a}}
=\bm{L}^{-\bm{a}_{0}}\oplus\bm{L}^{-\bm{a}_{1}}\oplus\dotsm\oplus
\bm{L}^{-\bm{a}_{N}}
$$
to define a map
$$
\Phi_{\mathfrak{M}}: Q_{\mathfrak{M}}\to 
\mathbb{P}(E_{\mathfrak{M}}):=
(E_{\mathfrak{M}}\setminus (\text{zero-section}))/\mathbb{C}^{*}
$$
by
$\Phi_{\mathfrak{M}}(\bm{q}):=[\bm{q}^{-\bm{a}_{0}}\oplus
\bm{q}^{-\bm{a}_{1}}\oplus\dotsm\oplus\bm{q}^{-\bm{a}_{N}}]$.
Note the map $\Phi_{\mathfrak{M}}$ is holomorphic and injective.
Then we define {\it the KSM-manifold} $Z_{\mathfrak{M}}$ associated with
$\mathfrak{M}$ by
$$
Z_{\mathfrak{M}}:=
\overline{\Phi_{\mathfrak{M}}(Q_{\mathfrak{M}})}\subset
\mathbb{P}(E_{\mathfrak{M}}).
$$
Note the KSM-manifold $Z_{\mathfrak{M}}$ is an $(n+l)$-dimensional
complex manifold with the structure of an $X_{P}$-bundle
$\widetilde{\pi}_{\mathfrak{M}}: Z_{\mathfrak{M}}\to W$.

Furthermore every KSM-manifold is Fano.

\begin{theorem}\label{KSMFano}
{\sc (\cite[Theorem~2.12]{Nakg19})}
For any $(n,l)$-dimensional KSM-data $\mathfrak{M}$, the KSM-manifold
$Z_{\mathfrak{M}}$ has the positive first Chen class.
\end{theorem}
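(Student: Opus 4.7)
The plan is to exhibit an explicit Kähler form in $2\pi c_1(Z_{\mathfrak{M}})$ by a Calabi-type ansatz adapted to the KSM structure, and to show that its positivity is forced precisely by condition (iv) of the KSM-data.

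First, using the relative tangent sequence $0 \to T_{Z_{\mathfrak{M}}/W} \to T Z_{\mathfrak{M}} \to \widetilde{\pi}_{\mathfrak{M}}^{*} T W \to 0$, I would write $K_{Z_{\mathfrak{M}}}^{-1} = K_{Z_{\mathfrak{M}}/W}^{-1} \otimes \widetilde{\pi}_{\mathfrak{M}}^{*} K_{W}^{-1}$, and identify the relative anticanonical bundle using the toric fiber $X_{P}$ and the embedding $\Phi_{\mathfrak{M}}$. Fiberwise, since $X_{P}$ is the toric Fano associated with the Fano polytope $P$, the relative anticanonical class is represented by the moment polytope $P^{*}$; globally, the twisting by the line bundles $\bm{L}^{-\bm{a}}$ in $E_{\mathfrak{M}}$ introduces correction terms controlled by the curvature vectors $\bm{\mu}_{\alpha}$. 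The Kähler-Einstein form $\nu_{0}$ supplies the representative of $\widetilde{\pi}_{\mathfrak{M}}^{*} c_{1}(W) = \widetilde{\pi}_{\mathfrak{M}}^{*} c_1(K_W^{-1})$.

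Next, I would choose a smooth strictly convex symplectic potential $u:\mathrm{Int}(P^{*})\to\mathbb{R}$ of Guillemin type (matching the boundary behavior so the resulting form extends smoothly across the toric divisors), and define a candidate global form of the shape
\begin{equation*}
\omega \;=\; \widetilde{\pi}_{\mathfrak{M}}^{*}\nu_{0} \;+\; \deldel\, \Psi\bigl(\log\|q_{1}\|_{h_{1}}^{2},\ldots,\log\|q_{l}\|_{h_{l}}^{2}\bigr),
\end{equation*}
where $\Psi$ is the Legendre transform of $u$. In a local frame provided by condition (iii) of Definition~\ref{KSM-data}, the Hessian of $\Psi$ in the fiber directions contributes the positive definite block $(\partial^{2}\Psi/\partial s_{i}\partial s_{j})$, while the horizontal block picks up a contribution of the form $\bigl(1 + \langle \bm{\mu}_{\alpha}, \bm{z}\rangle\bigr)\,\sqrt{-1}\, dz_{\alpha}\wedge d\overline{z_{\alpha}}$ at the point with moment value $\bm{z}\in P^{*}$, since differentiating in the horizontal directions brings down the curvature eigenvalues $\mu_{\alpha}^{(i)}$ against the moment map coordinates $z_{i} = \partial\Psi/\partial s_{i}$.

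Positivity is then pointwise a two-block check: the vertical block is positive by strict convexity of $u$, and the horizontal block is positive precisely when $1 + \langle \bm{\mu}_{\alpha}, \bm{z}\rangle > 0$ for every $\bm{z}\in P^{*}$ and every $\alpha$. But this is exactly the condition $-\bm{\mu}_{\alpha}\in\mathrm{Int}(P)$ in Definition~\ref{KSM-data}(iv), via the defining inequality $\langle \bm{z},-\bm{\mu}_{\alpha}\rangle<1$ for $\bm{z}\in P^{*}$. Finally, I would verify that $\omega$ represents $2\pi c_{1}(Z_{\mathfrak{M}})$ by matching its cohomology class with the anticanonical class computed in the first step (using that $\nu_{0}\in 2\pi c_{1}(W)$ and that the symplectic-potential ansatz reproduces the relative anticanonical polarization on each fiber). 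The main obstacle is the bookkeeping in the second step: correctly identifying $K_{Z_{\mathfrak{M}}/W}^{-1}$ as a twist of line bundles compatible with $\Phi_{\mathfrak{M}}$ and choosing $\Psi$ so that the resulting form extends smoothly across every toric stratum of the fiber; the actual positivity argument, once the ansatz is in place, is forced by condition (iv) and requires no further hypothesis.
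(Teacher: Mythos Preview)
Your proposal is correct and follows essentially the same strategy as the paper: build an explicit K\"ahler form via a Calabi-type ansatz, compute it in the adapted local frame of condition~(iii) to obtain a block-diagonal expression, and invoke condition~(iv) for horizontal positivity. The paper's execution is more economical on exactly the point you flag as ``the main obstacle'': rather than writing $\omega=\widetilde{\pi}_{\mathfrak{M}}^{*}\nu_{0}+\deldel\Psi$ and then separately verifying the cohomology class and the smooth extension, the paper writes down an explicit global volume form $\eta_{\refe}$ on $Z_{\mathfrak{M}}$ (using the specific potential $u_{P}(\bm{y})=\log\sum_{\bm{a}\in P^{*}\cap\mathbb{Z}^{l}}e^{\langle\bm{a},\bm{y}\rangle}$, which is the pullback of a Fubini--Study metric via $\Phi_{\mathfrak{M}}$ and hence extends across all toric strata for free) and sets $\omega_{\refe}:=-\deldel\log\eta_{\refe}$. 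Membership in $2\pi c_{1}(Z_{\mathfrak{M}})$ is then tautological, the Einstein condition $\mathrm{Ric}(\nu_{0})=\nu_{0}$ supplies the $\widetilde{\pi}_{\mathfrak{M}}^{*}\nu_{0}$ summand automatically, and the local computation and positivity argument are identical to yours.
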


\begin{proof}
We give a proof for reader's convenience.
For $q_{i}\in L_{i}\setminus\text{(zero-section)}$, put 
\begin{equation}\label{xq}
x_{i}(q_{i}):=-\log{h_{i}}(q_{i},q_{i}).
\end{equation}
The pair $\bm{x}=(x_{1},\dotsc, x_{l})$ can be seen as a function on
$Q_{\mathfrak{M}}$.
We define a function $u_{P}$ for
$\bm{y}=(y_{1},\dotsc,y_{l})\in\mathbb{R}^{l}$ by
\begin{equation}\label{uP}
u_{P}(\bm{y}):=
\log\left(\sum_{\bm{a}\in P^{*}\cap\mathbb{Z}^{l}}
e^{\langle \bm{a},\bm{y}\rangle}\right)
\end{equation}
and define a volume form $\eta_{\refe}$ on $Q_{\mathfrak{M}}$ by
$$
\eta_{\refe}:=\frac{(n+l)!}{n!}e^{-u_{P}(\bm{x})}
\bigwedge_{i=1}^{l}\left(\sqrt{-1}\frac{d\tau_{i}}{\tau_{i}}\wedge
\frac{d\overline{\tau_{i}}}{\overline{\tau_{i}}}\right)
\wedge(\pi^{*}_{\mathfrak{M}}\nu_{0})^{n},
$$
where $\tau_{i}$ is the fiber coordinate for $L_i|_U$ satisfying
$q_{i}=\tau_{i}e_{i}$.
Note the volume form $\eta_{\refe}$ naturally extends on
$Z_{\mathfrak{M}}$.
Now we consider the real $(1,1)$-form
\begin{equation}\label{ref}
\omega_{\refe}:=
-\deldel\log\eta_{\refe}\in{2\pi}c_{1}(Z_{\mathfrak{M}})
\end{equation}
on $Z_{\mathfrak{M}}$.
Use local coordinates and local frames introduced in
Definition~\ref{KSM-data} and the condition
$\mathrm{Ric}(\nu_{0})=\nu_{0}$ to obtain 
\begin{equation}\label{ref2}
\omega_{\refe}=
\sqrt{-1}\sum_{i,j=1}^{l}
\frac{\partial^{2}u_{P}}{\partial{y_{i}}\partial{y_{j}}}(\bm{x}) 
\frac{d\tau_{i}}{\tau_{i}}\wedge
\frac{d\overline{\tau_{j}}}{\overline{\tau_{j}}} 
+\sqrt{-1}\sum_{\alpha=1}^{n}
\left(1+\langle\bm{\mu}_{\alpha},\bm{m}_{0}(\bm{x})\rangle\right)
dz_{\alpha}\wedge d\overline{z_{\alpha}},
\end{equation}
where
$\bm{m}_{0}(\bm{x})=
\left(\frac{\partial{u_{P}}}{\partial{y_{1}}}(\bm{x}),
\dotsc,\frac{\partial{u_{P}}}{\partial{y_{l}}}(\bm{x})\right)$
is the moment map for the $(\mathbb{C}^{*})^{l}$-action on the fiber
$\widetilde{\pi}^{-1}_{\mathfrak{M}}(w)\subset{Z_{\mathfrak{M}}}$ for
$w\in W$ with respect to the K\"{a}hler form
$\deldel
\left(u_{P}(\bm{x})|_{\widetilde{\pi}^{-1}_{\mathfrak{M}}(w)}\right)$.
We note that $\bm{m}_{0}(\widetilde{\pi}^{-1}_{\mathfrak{M}}(w))=P^{*}$
(see for instance \cite{Ma87}).
It follows from definition of $u_P$ and the condition
$-\bm{\mu}_{\alpha}\in\mathrm{Int}(P)$ that $\omega_{\refe}$ is
positive. This completes the proof.
\end{proof}

\subsection{Examples}\label{Example}

We see typical examples of KSM-manifolds.

\begin{example}
If $n=0$, that is, $W$ is the one point space, then a KSM manifold is
nothing but a toric Fano manifold $X_{P}$.
\end{example}

\begin{example}
If $l=1$, then the Fano polytope is only the closed interval
$[-1,1]\subset\mathbb{R}$.
For an $(n,1)$-dimensional KSM-data $\mathfrak{M}=(W;L;[-1,1])$,
the associated KSM manifold $Z_{\mathfrak{M}}$ is
$\mathbb{P}(L\oplus\mathcal{O}_{W})$
which has the structure of a $\mathbb{P}^{1}(\mathbb{C})$-bundle over
$W$. Here $\mathcal{O}_{W}$ is the trivial line bundle over $W$.
In particular, the $(1,1)$-dimensional KSM data
$\mathfrak{M}=(\mathbb{P}^{1}(\mathbb{C});
\mathcal{O}_{\mathbb{P}^{1}}(1);[-1,1])$
gives the one point blow up
$Z_{\mathfrak{M}}=\mathbb{P}^{2}(\mathbb{C})\#
\overline{\mathbb{P}^{2}(\mathbb{C})}$ of the projective plane.
\end{example}

\begin{example}
Let $P_{2}$ be the convex hull of $\{(1,0),(0,1),(-1,-1)\}$.
This is a Fano polytope which gives $\mathbb{P}^{2}(\mathbb{C})$.
The $(1,2)$-dimensional KSM-data
$\mathfrak{M}=(\mathbb{P}^{1}(\mathbb{C});
\mathcal{O}_{\mathbb{P}^{1}}(-1),\mathcal{O}_{\mathbb{P}^{1}};P_{2})$
gives the projective bundle
$Z_{\mathfrak{M}}
=\mathbb{P}(\mathcal{O}_{\mathbb{P}^{1}}(-1)\oplus
\mathcal{O}_{\mathbb{P}^{1}}\oplus\mathcal{O}_{\mathbb{P}^{1}})$.
On the other hand, the data
$(\mathbb{P}^{1}(\mathbb{C});\mathcal{O}_{\mathbb{P}^{1}}(1),
\mathcal{O}_{\mathbb{P}^{1}}; P_{2})$ is not a KSM one,
since the condition (iv) in Definition~\ref{KSM-data} does not hold.
In fact, the associated manifold 
$\mathbb{P}(\mathcal{O}_{\mathbb{P}^{1}}(1)\oplus
\mathcal{O}_{\mathbb{P}^{1}}\oplus\mathcal{O}_{\mathbb{P}^{1}})$ is not
Fano (see for instance \cite{CLSbook, Odbook}).
Note that our definition of projective bundles is different from that in
\cite{CLSbook, Odbook}.
For example, we define
$$
\mathbb{P}(\mathcal{O}_{\mathbb{P}^{1}}(1)\oplus
\mathcal{O}_{\mathbb{P}^{1}}\oplus\mathcal{O}_{\mathbb{P}^{1}})
:=\left(
(\mathcal{O}_{\mathbb{P}^{1}}(1)\oplus\mathcal{O}_{\mathbb{P}^{1}}
\oplus\mathcal{O}_{\mathbb{P}^{1}})\setminus(\text{zero-section})
\right)/\mathbb{C}^{*}.
$$
\end{example}

A KSM-data $\mathfrak{M}=(W;L_{1},\dotsc,L_{l};P)$ is said to be
{\it homogeneous} if the following conditions are satisfied: 

\begin{enumerate}
\item[(i)] $W$ is a simply connected compact complex homogeneous
	   K\"{a}hler manifold (i.e. K\"{a}hler C-space).
	   Then we have $W=G/U=G_{c}/K$, where $G$ is a simply connected
	   complex semisimple Lie group, $U$ is a parabolic subgroup of
	   $G$,
	   $G_{c}$ is a maximal compact subgroup of $G$ and
	   $K=G_{c}\cap{U}$.
\item[(ii)] The K\"{a}hler-Einstein metric $\nu_{0}$ is
	    $G_{c}$-invariant.
\item[(iii)] Each line bundle $L_{i}$ admits a $G$-action compatible
	     with the $G$-action on $W$, and the Hermitian metric
	     $h_{i}$ is $G_{c}$-invariant.
\end{enumerate}
The existence problem for K\"{a}hler-Einstein metrics and
K\"{a}hler-Ricci solitons on homogeneous KSM-manifolds are discussed by
Podest\`{a}-Spiro~\cite{PS10} and Delcroix~\cite{De20} from view points
of homogeneous toric bundles.
More generally, Delcroix-Hultgren~\cite{DH21} discuss multiplier
Hermitian-Einstein metrics on
homogeneous KSM-manifolds.
These works depend on the representation theory of Lie groups.
On the other hand, our arguments are based on convex analysis and
combinatorics for toric geometry.
Thus we can deal with a non-homogeneous KSM-manifold as follows.

\begin{example}
Let $\mathcal{M}_{n}$ be the moduli space of smooth hypersurfaces of
degree $n$ in $\mathbb{P}^{n+1}(\mathbb{C})$
and let $\mathcal{M}_{n}^{\mathrm{EK}}\subset\mathcal{M}_n$ be the
moduli space of K\"{a}hler-Einstein hypersurfaces.
The space $\mathcal{M}_{n}^{\mathrm{EK}}$ is non-empty (for instance the
Fermat hypersurface
$\left\{\sum_{i=0}^{n+1}(X_{i})^{n}=0\right\}$ is in
$\mathcal{M}_{n}^{\mathrm{EK}}$) and
open in $\mathcal{M}_{n}$.
For fixed $W\in\mathcal{M}_{n}^{\mathrm{EK}}$, put
$L=\mathcal{O}_{\mathbb{P}^{n+1}}(1)|_{W}$.
Then $K_{W}^{-1}\cong L^{\otimes 2}$ by the adjunction formula.
Take an $l$-dimensional Fano polytope $P$ and integers
$k_{1},\dotsc,k_{l}\in\mathbb{Z}$
such that $-\frac{1}{2}(k_{1},\dotsc,k_{l})\in\mathrm{Int}(P)$.
Then $\mathfrak{M}=(W;L^{\otimes{k_{1}}},\dotsc,L^{\otimes{k_{l}}};P)$
is an $(n,l)$-dimensional KSM data,
and the KSM manifold $Z_{\mathfrak{M}}$ is non-homogeneous in general.
\end{example}

Other explicit examples of higher dimensional KSM-manifolds are also
discussed by the first author.
In \cite{Nakg15}, K\"{a}hler-Einstein KSM-manifolds which are not
asymptotically Chow semi-stable are constructed.

\subsection{Existence for canonical K\"{a}hler metrics on
KSM-manifolds}\label{MetricKSM}

Finally we review known theorems on the existence for canonical
K\"{a}hler metrics on KSM-manifolds which motivate our results.

\begin{theorem}
Let $\mathfrak{M}=(W;L;[-1,1])$ be an $(n,1)$-dimensional KSM-data and
 $Z_{\mathfrak{M}}$ be the associated KSM-manifold.
\begin{enumerate}
\item[(i)] {\rm (Sakane~\cite{Sak86},
	   Koiso-Sakane~\cite{KoSa86, KoSa88}, Mabuchi~\cite{Ma87})} 
	   The KSM-manifold $Z_{\mathfrak{M}}$ admits a
	   K\"{a}hler-Einstein metric if and only if
$$
\int_{-1}^{1}z\prod_{\alpha=1}^{n}(1+\mu_{\alpha}z)dz=0.
$$
\item[(ii)] {\rm (Koiso~\cite{Koi90})}
	    The KSM-manifold $Z_{\mathfrak{M}}$ always admits a
	    K\"{a}hler-Ricci soliton with fiber-directed soliton vector
	    field.
\end{enumerate}
\end{theorem}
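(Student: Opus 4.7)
The plan is to derive both parts as direct corollaries of Theorem~\ref{ThmA} specialized to $l=1$, $P=[-1,1]$, $P^{*}=[-1,1]$. In this one-dimensional setting the single coordinate $z_{1}=z$ ranges over $[-1,1]$ and the curvature vectors reduce to scalars $\mu_{\alpha}=\mu_{\alpha}^{(1)}$ with $|\mu_{\alpha}|<1$ by Definition~\ref{KSM-data}~(iv). The integral condition \eqref{barycenter} becomes
\begin{equation*}
\int_{-1}^{1} z\prod_{\alpha=1}^{n}(1+\mu_{\alpha}z)\,e^{-\sigma(\theta_{V}^{(\omega)})}\,dz=0.
\end{equation*}

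For part~(i) I would choose the constant function $\sigma\equiv 0$ (which trivially satisfies $\dot\sigma\leqq 0\leqq\ddot\sigma$) and the zero vector field $V=0$, which is fiber-directed. Then $\theta_{V}^{(\omega)}=0$ by \eqref{theta-normalization}, the exponential factor is identically $1$, and a multiplier Hermitian-Einstein metric of type $(0,0)$ is by definition a K\"ahler-Einstein metric. Theorem~\ref{ThmA} then gives precisely the stated equivalence with $\int_{-1}^{1} z\prod_{\alpha=1}^{n}(1+\mu_{\alpha}z)\,dz=0$.

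For part~(ii) I would take $\sigma(s)=-s+C$ (so $\dot\sigma=-1\leqq 0$, $\ddot\sigma=0$), which makes the multiplier Hermitian-Einstein equation equivalent to the K\"ahler-Ricci soliton equation. Each fiber-directed $V$ on $Z_{\mathfrak{M}}$ corresponds, via $\theta_{V}^{(\omega)}=-c z+C_{V}$ on $P^{*}$, to a unique slope $c=c_{1}\in\mathbb{R}$, so existence of a K\"ahler-Ricci soliton with fiber-directed soliton vector field is equivalent by Theorem~\ref{ThmA} to finding $c\in\mathbb{R}$ with
\begin{equation*}
F(c):=\int_{-1}^{1}z\prod_{\alpha=1}^{n}(1+\mu_{\alpha}z)\,e^{cz}\,dz=0,
\end{equation*}
after absorbing the harmless positive multiplicative constant $e^{C-C_{V}}$. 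I would then verify that $F$ is smooth and strictly increasing in $c$ with $F(c)\to\pm\infty$ as $c\to\pm\infty$: strict monotonicity follows from $F'(c)=\int_{-1}^{1}z^{2}\prod_{\alpha}(1+\mu_{\alpha}z)\,e^{cz}\,dz>0$, using the positivity of each factor $1+\mu_{\alpha}z$ on $[-1,1]$ granted by $|\mu_{\alpha}|<1$, while the limits follow from domination near $z=1$ as $c\to+\infty$ and near $z=-1$ as $c\to-\infty$. The intermediate value theorem then produces a unique $c_{*}$ solving $F(c_{*})=0$, giving the desired fiber-directed soliton vector field.

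The only real analytic difficulty is hidden inside Theorem~\ref{ThmA} itself, whose ``if'' direction requires constructing a multiplier Hermitian-Einstein metric by solving a highly nonlinear Monge-Amp\`ere type equation on $Z_{\mathfrak{M}}$; once that is granted, reducing parts (i) and (ii) to elementary one-variable calculus on $[-1,1]$ is routine. No additional obstruction from the base direction can appear here because $W$ is already K\"ahler-Einstein by assumption in the KSM-data, and restricting to fiber-directed $V$ localizes the deformation entirely along the toric fiber $X_{P}=\mathbb{P}^{1}(\mathbb{C})$.
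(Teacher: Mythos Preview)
The paper does not give its own proof of this theorem: it is stated in Section~\ref{MetricKSM} purely as a review of known results, with the attributions to Sakane, Koiso--Sakane, Mabuchi, and Koiso serving as the proof. Immediately afterward the paper remarks that Theorem~\ref{ThmA} generalizes these statements, which is exactly the route you take. Your derivation is correct and non-circular, since the proof of Theorem~\ref{ThmA2} rests on Berman--Berndtsson and not on the classical results being recovered. One cosmetic point: with the paper's convention $\theta_{V}^{(\omega)}=-cz+C_{V}$ and $\sigma(s)=-s+C$, the exponential factor in \eqref{barycenter} is $e^{-cz+C_{V}-C}$, so your $F$ should carry $e^{-cz}$ rather than $e^{cz}$; this is harmless for the existence argument since you are searching over all $c\in\mathbb{R}$ and the substitution $c\mapsto -c$ fixes it.
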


The first author considered general KSM-data to obtain the following
theorem which also generalizes a Wang-Zhu's work \cite{WaZh04} on the
existence for K\"{a}hler-Ricci solitons on toric Fano manifolds.
\begin{theorem}
{\rm (\cite{Nakg19})}
The KSM-manifold $Z_{\mathfrak{M}}$ associated to an $(n,l)$-dimensional
KSM-data $\mathfrak{M}=(W;L_{1}, L_{2},\dotsc,L_{l};P)$ always admits
a K\"{a}hler-Ricci soliton $\omega$ with fiber-directed soliton vector
field.
Furthermore $\omega$ becomes a K\"{a}hler-Einstein metric if and only if
for any $k=1,2,\dotsc,l$, we have
$$
\int_{P^{*}}z_{k}\prod_{\alpha=1}^{n}(1+\langle \bm{\mu}_{\alpha},
\bm{z} \rangle ) \bm{dz}=0.
$$
\end{theorem}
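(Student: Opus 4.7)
The plan is to reduce the Kähler-Ricci soliton equation on $\Z$ with fiber-directed soliton vector field to a real Monge-Ampère equation on $\mathbb{R}^{l}$ via a Calabi ansatz, and then to run a Wang-Zhu style variational argument adapted to the extra weight $\prod_{\alpha=1}^{n}(1+\langle\bm{\mu}_{\alpha},\bm{y}\rangle)$ coming from the base curvatures. This weight is strictly positive on $P^{*}$ by condition (iv) of Definition~\ref{KSM-data}, so the toric case ($n=0$, treated in \cite{WaZh04}) serves as the model and main input.

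I first restrict to $(\mathbb{C}^{*})^{l}$-invariant Kähler potentials $\phi=\phi(\bm{x})$ depending only on the fiber coordinate $\bm{x}$ from \eqref{xq}. Setting $u=u_{P}+\phi$, formula \eqref{ref2} extends with $u_{P}$ replaced by $u$ and the moment map $\bm{m}_{0}$ by $\nabla u$, and the resulting $\omega=\omega_{\refe}+\deldel\phi$ is positive exactly when $u$ is strictly convex on $\mathbb{R}^{l}$ with $\nabla u$ landing in $\mathrm{Int}(P^{*})$. A direct computation of $\omega^{n+l}/\eta_{\refe}$, combined with $\mathrm{Ric}(\nu_{0})=\nu_{0}$, turns the soliton equation $\mathrm{Ric}(\omega)-\omega=L_{V}\omega$ for $V=\sum_{k=1}^{l}c_{k}\tau_{k}\partial_{\tau_{k}}$ into the real Monge-Ampère equation
$$
\det\bigl(u_{ij}(\bm{x})\bigr)\prod_{\alpha=1}^{n}\bigl(1+\langle\bm{\mu}_{\alpha},\nabla u(\bm{x})\rangle\bigr)=\mathrm{const}\cdot e^{-u(\bm{x})-\langle\bm{c},\nabla u(\bm{x})\rangle}
$$
on $\mathbb{R}^{l}$, which under the Legendre transform $v(\bm{y})=\sup_{\bm{x}}(\langle\bm{x},\bm{y}\rangle-u(\bm{x}))$ becomes an equation for $v$ on $\mathrm{Int}(P^{*})$ with the standard toric boundary behavior dictated by $P$.

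The algebraic input is the functional
$$
F(\bm{c})=\int_{P^{*}}e^{-\langle\bm{c},\bm{y}\rangle}\prod_{\alpha=1}^{n}\bigl(1+\langle\bm{\mu}_{\alpha},\bm{y}\rangle\bigr)\bm{dy}
$$
on $\mathbb{R}^{l}$. Coercivity of $F$ follows from $\bm{0}\in\mathrm{Int}(P^{*})$ and strict convexity is immediate, so $F$ has a unique critical point $\bm{c}^{*}$, characterized by $\int_{P^{*}}y_{k}\prod_{\alpha}(1+\langle\bm{\mu}_{\alpha},\bm{y}\rangle)e^{-\langle\bm{c}^{*},\bm{y}\rangle}\bm{dy}=0$ for all $k$. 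Only for this choice of $\bm{c}=\bm{c}^{*}$ can the reduced Monge-Ampère equation be solved; one then runs the continuity method, with openness standard and closedness reducing to a $C^{0}$ bound on $u$. The vanishing of the weighted barycenter prevents $\nabla u$ from drifting toward any facet of $P^{*}$, which is precisely the mechanism behind the Wang-Zhu a priori estimates; the positive weight $\prod_{\alpha}(1+\langle\bm{\mu}_{\alpha},\cdot\rangle)$ enters only as a smooth multiplicative factor on the compact polytope. This yields a fiber-directed Kähler-Ricci soliton on $\Z$ for every KSM-data.

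Finally, this soliton is Kähler-Einstein precisely when $V^{*}=0$, equivalently $\bm{c}^{*}=0$; by the characterization of $\bm{c}^{*}$ above, this holds if and only if the stated barycenter condition $\int_{P^{*}}z_{k}\prod_{\alpha}(1+\langle\bm{\mu}_{\alpha},\bm{z}\rangle)\bm{dz}=0$ is satisfied for every $k$. The main technical obstacle is adapting the Wang-Zhu $C^{0}$ and higher-order estimates to the weighted Monge-Ampère equation: strict positivity of the weight on the compact polytope $P^{*}$ makes the adaptation largely routine, but it requires careful bookkeeping of the new terms throughout, in particular to verify that the soliton drift $\langle\bm{c}^{*},\nabla u\rangle$ is correctly absorbed by the weighted barycenter condition during the $C^{0}$ step.
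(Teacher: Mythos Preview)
The theorem you are proving is quoted in Section~\ref{MetricKSM} from \cite{Nakg19} without proof; the present paper does not give an independent argument for it. However, it is the special case $\sigma(s)=-s$ of Theorem~\ref{ThmA2}, which the paper does prove, so the natural comparison is with that proof.

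Your reduction to the real Monge--Amp\`{e}re equation on $\mathbb{R}^{l}$ with the weight $\prod_{\alpha}(1+\langle\bm{\mu}_{\alpha},\cdot\rangle)$ is exactly the same as the paper's (see \eqref{gMAeq}--\eqref{g-function}). Where you diverge is in the solvability step: you propose to run the Wang--Zhu continuity method with adapted $C^{0}$ estimates, whereas the paper bypasses all of that by invoking \cite[Theorem~1.1]{BB13} directly, which solves \eqref{gMAeq} as soon as the $g$-barycenter of $P^{*}$ is the origin. Your route is the classical one and is presumably what \cite{Nakg19} itself does; it is self-contained but requires the estimate bookkeeping you mention. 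The paper's route is shorter and treats all admissible $\sigma$ at once, at the cost of importing the Berman--Berndtsson machinery. Your use of the strictly convex, proper functional $F(\bm{c})$ to locate the unique soliton vector $\bm{c}^{*}$ is the standard step (needed for the ``always admits'' clause) and is not spelled out in the present paper, since Theorem~\ref{ThmA2} takes $V$ as given; it is correct as stated.
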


Therefore our result in Theorem~\ref{ThmA} generalizes the above
theorems from view points of general KSM-data and multiplier
Hermitian-Einstein metrics.

\section{Multiplier Hermitian-Einstein metrics and the
$(\sigma,V)$-Futaki invariant on KSM-manifolds}\label{section:Futaki}

The goal in this section is to prove Theorem~\ref{ThmA}.
We first prove some lemmas for the $(\sigma,V)$-Futaki invariant for
general Fano manifolds to express this invariant in terms of KSM-data.

\subsection{The $(\sigma,V)$-Futaki invariant for Fano manifolds}

Let $M$ be an $n$-dimensional Fano manifold with a K\"{a}hler metric
$\omega\in 2 \pi c_{1}(M)$.
Recall the Ricci potential for $\omega$ is denoted by $\rho_{\omega}$.
Fix a holomorphic vector field $V$ on $M$ to consider a multiplier
Hermitian metric of type-$(\sigma,V)$.
As in the Section~\ref{Intro}, for any holomorphic vector field $v$ on
$M$, its potential function $\theta_{v}^{(\omega)}$ is defined by
\begin{equation}\label{holopote}
i_{v}\omega=\sqrt{-1}\overline{\partial}\theta_{v}^{(\omega)}
\quad\text{and}\quad  \int_{M}\theta_{v}^{(\omega)}\omega^{n}=0.
\end{equation}

\begin{lemma}\label{cv}
For any holomorphic vector field $v$ on $M$,
there exists a unique constant $c_{v}$ such that
$$
\Delta_{\omega}\pote+\pote+v(\rho_{\omega})=c_{v}
$$
where $\Delta_{\omega}$ is the negative Laplacian for $\omega$, and
$c_{v}$ is given by
$$
c_{v}=\frac{\int_{M}\theta_{v}^{(\omega)}e^{\rho_{\omega}}\omega^{n}}
{\int_{M}\omega^{n}}.
$$
\end{lemma}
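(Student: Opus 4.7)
The plan is to show that $F := \Delta_{\omega}\pote + \pote + v(\rho_{\omega})$ is a constant, and then to identify its value by a weighted integration against $e^{\rho_{\omega}}\omega^{n}$.

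First I would prove $F$ is constant by computing $L_{v}(\mathrm{Ric}(\omega))$ in two different ways. Since $\omega$ is closed, Cartan's magic formula gives $L_{v}\omega = d(i_{v}\omega) = \deldel\pote$, and the pointwise identity $n\deldel f\wedge\omega^{n-1} = (\Delta_{\omega}f)\omega^{n}$ applied with $f = \pote$ yields $L_{v}\omega^{n} = (\Delta_{\omega}\pote)\omega^{n}$. Differentiating $\mathrm{Ric}(\omega) = -\deldel\log\omega^{n}$ along $v$ therefore produces $L_{v}\mathrm{Ric}(\omega) = -\deldel\Delta_{\omega}\pote$. On the other hand, differentiating $\mathrm{Ric}(\omega) = \omega + \deldel\rho_{\omega}$ along $v$ (using $L_{v}\rho_{\omega} = v(\rho_{\omega})$ and $L_{v}\omega = \deldel\pote$, together with the fact that $L_{v}$ commutes with $\partial$ and $\overline{\partial}$ because $v$ is holomorphic) produces $L_{v}\mathrm{Ric}(\omega) = \deldel(\pote + v(\rho_{\omega}))$. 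Equating the two expressions yields $\deldel F = 0$. Any pluriharmonic function on a compact Kähler manifold is constant (splitting into real and imaginary parts reduces this to the standard fact that a real harmonic function on a compact Riemannian manifold is constant), so $F \equiv c_{v}$ for some scalar $c_{v}$.

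To identify $c_{v}$, integrate the identity $F = c_{v}$ against the measure $e^{\rho_{\omega}}\omega^{n}$. The normalization in \eqref{Ricci} gives $\int_{M}e^{\rho_{\omega}}\omega^{n} = \int_{M}\omega^{n}$, so the left-hand side is $c_{v}\int_{M}\omega^{n}$. On the right-hand side, the Leibniz rule combined with $L_{v}\omega^{n} = (\Delta_{\omega}\pote)\omega^{n}$ yields $L_{v}(e^{\rho_{\omega}}\omega^{n}) = e^{\rho_{\omega}}(v(\rho_{\omega}) + \Delta_{\omega}\pote)\omega^{n}$; but the Lie derivative of a top form is exact ($L_{v} = d\,i_{v} + i_{v}\,d$), so its integral over the closed manifold $M$ vanishes. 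Consequently $\int_{M}(\Delta_{\omega}\pote + v(\rho_{\omega}))e^{\rho_{\omega}}\omega^{n} = 0$, and only the $\pote$-term survives on the right, giving the claimed formula $c_{v} = \int_{M}\pote\, e^{\rho_{\omega}}\omega^{n}/\int_{M}\omega^{n}$.

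I do not expect a substantive obstacle here; the only bookkeeping to be careful about is the trace identity $n\deldel f\wedge\omega^{n-1} = (\Delta_{\omega}f)\omega^{n}$ and the matching of sign conventions for the ``negative'' Laplacian $\Delta_{\omega}$. Once these are pinned down, both the pluriharmonicity of $F$ and the weighted-integral evaluation fall out automatically.
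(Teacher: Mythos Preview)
Your proposal is correct and follows essentially the same route as the paper: the paper differentiates the relation $\mathrm{Ric}(\omega)-\omega=\deldel\rho_{\omega}$ along $v$ (which amounts exactly to your two computations of $L_{v}\mathrm{Ric}(\omega)$) to obtain $\deldel F=0$, and then uses $\int_{M}L_{v}(e^{\rho_{\omega}}\omega^{n})=0$ together with the normalization $\int_{M}e^{\rho_{\omega}}\omega^{n}=\int_{M}\omega^{n}$ to identify $c_{v}$. Your write-up simply makes explicit the intermediate steps the paper leaves implicit.
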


\begin{proof}
Note $L_{v}\omega=\deldel\pote$ which comes from the first equation in
\eqref{holopote} and Cartan's formula for the Lie derivative.
This gives $L_{v}(\omega^{n})=(\Delta_{\omega}\pote)\omega^{n}$.
Then the derivative of the first equation in \eqref{Ricci} with respect
to $v$ yields
$$
\Delta_{\omega}\pote+\pote+v(\rho_{\omega})=c_{v}
$$ 
for some constant $c_{v}$.
Moreover, by Stokes' theorem, we have
\begin{equation}\label{intLap2}
0=
\int_{M}L_{v}(e^{\rho_{\omega}}\omega^{n})
=\int_{M}\left(\Delta_{\omega}\pote+v(\rho_{\omega})\right)
e^{\rho_{\omega}}\omega^{n}.
\end{equation}
Thus
$c_{v}=\int_{M}\pote{e^{\rho_{\omega}}}\omega^{n}/\int_{M}\omega^{n}$.
\end{proof}

\begin{lemma}\label{Fut2}
For any holomorphic vector field $v$ on $M$, we have
$$
\mathrm{Fut}_{V}^{\sigma}(v)=-\int_{M}
(\pote-c_{v})e^{-\sigma(\Pote)}\omega^{n} 
$$
where $c_{v}$ is the constant in Lemma~\ref{cv}.
\end{lemma}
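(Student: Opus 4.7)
The plan is to reduce everything to the identity from Lemma \ref{cv}, together with a single integration-by-parts (Stokes-type) computation of the same flavor as \eqref{intLap2}.

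First, I would split the integrand of $\mathrm{Fut}_{V}^{\sigma}(v)$ according to $v(\rho_{\omega}+\sigma(\Pote)) = v(\rho_{\omega}) + v(\sigma(\Pote))$. Lemma \ref{cv} gives immediately
$$
v(\rho_{\omega}) \;=\; c_{v}-\pote-\Delta_{\omega}\pote,
$$
so the first piece contributes
$$
\int_{M}\bigl(c_{v}-\pote-\Delta_{\omega}\pote\bigr)e^{-\sigma(\Pote)}\omega^{n}.
$$
This already produces the desired main term $\int_M(c_v-\pote)e^{-\sigma(\Pote)}\omega^n$, plus an unwanted Laplacian term that must be cancelled against the second piece.

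Next, for the $v(\sigma(\Pote))$ contribution, the key observation is that $v(\sigma(\Pote))\,e^{-\sigma(\Pote)}=-v\bigl(e^{-\sigma(\Pote)}\bigr)$, so one has to evaluate $\int_{M}v(f)\,\omega^{n}$ with $f=e^{-\sigma(\Pote)}$. Exactly as in the argument leading to \eqref{intLap2}, apply Stokes' theorem to $L_{v}(f\omega^{n})=d(i_{v}(f\omega^{n}))$ and use $L_{v}(\omega^{n})=(\Delta_{\omega}\pote)\omega^{n}$ (which follows from $L_{v}\omega=\deldel\pote$ and Cartan's formula) to obtain the general integration-by-parts identity
\begin{equation*}
\int_{M}v(f)\,\omega^{n} \;=\; -\int_{M}f\,\Delta_{\omega}\pote\,\omega^{n}.
\end{equation*}
Applied to $f=e^{-\sigma(\Pote)}$, this gives
$$
\int_{M}v(\sigma(\Pote))e^{-\sigma(\Pote)}\omega^{n} \;=\; \int_{M}e^{-\sigma(\Pote)}\Delta_{\omega}\pote\,\omega^{n}.
$$

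Finally, I would add the two contributions: the $\Delta_{\omega}\pote$ terms cancel exactly, and what remains is the claimed
$$
\mathrm{Fut}_{V}^{\sigma}(v)\;=\;\int_{M}(c_{v}-\pote)e^{-\sigma(\Pote)}\omega^{n}\;=\;-\int_{M}(\pote-c_{v})e^{-\sigma(\Pote)}\omega^{n}.
$$
There is essentially no obstacle; the only point requiring a modicum of care is ensuring the sign conventions for $\pote$, $\rho_{\omega}$, and $\Delta_{\omega}$ are consistent across \eqref{holopote}, \eqref{Ricci}, and Lemma \ref{cv}, and that the Stokes identity is applied in the form $\int_{M}L_{v}(f\omega^{n})=0$, exactly as in \eqref{intLap2}.
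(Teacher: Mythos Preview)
Your proposal is correct and is essentially identical to the paper's proof: both substitute $v(\rho_{\omega})=c_{v}-\pote-\Delta_{\omega}\pote$ from Lemma~\ref{cv} and then apply Stokes' theorem in the form $\int_{M}L_{v}\bigl(e^{-\sigma(\Pote)}\omega^{n}\bigr)=0$ to cancel the Laplacian term against the $v(\sigma(\Pote))$ term. The only difference is cosmetic---the paper packages the cancellation as the single vanishing identity \eqref{intLap}, while you compute the two contributions separately before adding them.
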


\begin{proof}
By Lemma~\ref{cv}, it suffice to show 
\begin{equation}\label{intLap}
\int_{M}\left(-\Delta_{\omega}\pote+v(\sigma(\Pote))\right)
e^{-\sigma(\Pote)}\omega^{n}=0.
\end{equation}
By Stokes' theorem, we have
$\int_{M}L_v(e^{-\sigma(\Pote)}\omega^{n})=0
$
which yields \eqref{intLap}.
\end{proof}

\subsection{Multiplier Hermitian-Einstein metrics on KSM-manifolds}

We use same notation as in the Section~\ref{KSM-mfd}.
Let $\mathfrak{M}=(W;L_{1},L_{2},\dotsc,L_{l};P)$ be an
$(n,l)$-dimensional KSM-data
and $Z_{\mathfrak{M}}$ be the associated KSM-manifold with a
fiber-directed holomorphic vector field $V$.
Let $\omega_{\refe}$ be the reference K\"{a}hler metric on $\Z$ defined
in \eqref{ref}.

An $(S^{1})^{l}(\subset(\mathbb{C}^{*})^{l})$-invariant function $u$ on
$X_{P}$ can be seen as a function of
$\bm{y}=(y_{1},\dotsc,y_{l})\in\mathbb{R}^{l}$ on
$(\mathbb{C}^{*})^{l}=\{(t_{1},\dotsc,t_{l})\}\subset{X_{P}}$, where
$y_{i}=-\log|t_{i}|^{2}$.
Moreover, by composing with $\bm{x}=(x_{1},\dotsc,x_{l})$ defined in
\eqref{xq}, $u(\bm{x})$ can be seen as a function on
$Q_{\mathfrak{M}}\subset{Z_{\mathfrak{M}}}$. 
We denote by $\mathcal{F}_{\mathfrak{M}}$ the set of such functions on
$Q_{\mathfrak{M}}$.
For instance,
the Ricci potential $\rho_{\omega_{\refe}}$ for $\omega_{\refe}$ is
reduced to an element in $\mathcal{F}_{\mathfrak{M}}$.
Since
\begin{align*}
\omega_{\refe}^{n+l}=
\frac{(n+l)!}{n!}
&\det\left(
\frac{\partial^{2}u_{P}}{\partial{y_{i}}\partial{y_{j}}}(\bm{x})
\right)
\prod_{\alpha=1}^{n}\left(
1+\langle\bm{\mu}_{\alpha},\bm{m}_{0}(\bm{x})\rangle\right)\\
&\left(\bigwedge_{i=1}^{l}\left(\sqrt{-1}\frac{d\tau_{i}}{\tau_{i}}
\wedge\frac{d\overline{\tau_{i}}}{\overline{\tau_{i}}}\right)\right)
\wedge\left(\bigwedge_{\alpha=1}^{n}\left(\sqrt{-1}
dz_{\alpha}\wedge{d\overline{z_{\alpha}}}\right)\right)
\end{align*}
by \eqref{ref2}, 
then by definition of the Ricci potential, there exists a constant
$c\in\mathbb{R}$ such that
\begin{equation}\label{HessuP}
e^{-\rho_{\omega}-c}=
\frac{\omega_{\refe}^{n+l}}{\eta_{\refe}}=
e^{u_{P}(\bm{x})}
\det\left(
\frac{\partial^{2}u_{P}}{\partial y_{i}\partial y_{j}}(\bm{x})\right)
\prod_{\alpha=1}^{n}
\left(1+\langle\bm{\mu}_{\alpha},\bm{m}_{0}(\bm{x})\rangle\right)
\end{equation}
on 
$\pi_{\mathfrak{M}}^{-1}(w)(\subset{Q_{\mathfrak{M}}})$
where
$\bm{m}_{0}(\bm{x})=\left(
\frac{\partial{u_{P}}}{\partial{y_1}}(\bm{x}),\dotsc,
\frac{\partial{u_{P}}}{\partial{y_{l}}}(\bm{x})\right)$.

The complex torus $(\mathbb{C}^{*})^{l}$ naturally acts on $\Z$ and
generates fiber-directed holomorphic vector fields.
Let $v_{i}$ be the fiber-directed holomorphic vector field on $\Z$ which
corresponds to
$t_{i}\frac{\partial}{\partial
t_{i}}\in\mathrm{Lie}((\mathbb{C}^{*})^{l})$.
Then, by definition, its potential function
$\theta_{v_{i}}^{(\omega_{\refe})}$ with respect to $\omega_{\refe}$ is
reduced to an element in $\mathcal{F}_{\mathfrak{M}}$, and in fact
\begin{equation}\label{bv}
\theta_{v_{i}}^{(\omega_{\refe})}(\bm{x})
=-\frac{\partial{u_{P}}}{\partial{y_{i}}}(\bm{x})+b_{v_{i}}
\end{equation}
on $Q_{\mathfrak{M}}$ for some constant $b_{v_i}\in\mathbb{R}$.

\begin{lemma}\label{thetacv}
We have 
$$
\theta_{v_{i}}^{(\omega_{\refe})}(\bm{x})
=-\frac{\partial u_{P}}{\partial y_{i}}(\bm{x})+c_{v_{i}}
$$
on $Q_{\mathfrak{M}}$, where $c_{v_{i}}$ is the unique constant in
Lemma~\ref{cv}.
\end{lemma}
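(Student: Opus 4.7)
The plan is to identify the constant $b_{v_{i}}$ in (bv) with the constant $c_{v_{i}}$ from Lemma~\ref{cv}. Since (bv) already expresses $\theta_{v_{i}}^{(\omega_{\refe})}$ as $-\partial u_{P}/\partial y_{i} + b_{v_{i}}$, what remains is only the equality $b_{v_{i}} = c_{v_{i}}$. For this I would substitute (bv) into the integral formula
\begin{equation*}
c_{v_{i}} = \frac{\int_{\Z}\theta_{v_{i}}^{(\omega_{\refe})} e^{\rho_{\omega_{\refe}}}\omega_{\refe}^{n+l}}{\int_{\Z}\omega_{\refe}^{n+l}}
\end{equation*}
from Lemma~\ref{cv}, together with the Ricci-potential normalization $\int_{\Z} \omega_{\refe}^{n+l} = \int_{\Z} e^{\rho_{\omega_{\refe}}} \omega_{\refe}^{n+l}$ coming from \eqref{Ricci}. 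The identity $b_{v_{i}} = c_{v_{i}}$ then reduces to the single vanishing
\begin{equation*}
\int_{\Z}\frac{\partial u_{P}}{\partial y_{i}}\cdot e^{\rho_{\omega_{\refe}}}\omega_{\refe}^{n+l} = 0.
\end{equation*}

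To establish this vanishing I would invoke the identity $e^{\rho_{\omega_{\refe}}}\omega_{\refe}^{n+l} = e^{-c}\eta_{\refe}$ provided by \eqref{HessuP}, together with the explicit factorization of $\eta_{\refe}$. Integrating out the $(S^{1})^{l}$-angular coordinates in the fiber (writing $\tau_{j} = r_{j}e^{\sqrt{-1}\theta_{j}}$ with $y_{j} = -\log r_{j}^{2}$) and pushing forward along $\widetilde{\pi}_{\mathfrak{M}}\colon \Z\to W$---using that the open torus orbit has full measure in the toric fiber $X_{P}$---the integral becomes, up to a strictly positive constant depending only on $(2\pi)^{l}$ and $\int_{W}\nu_{0}^{n}$, the Euclidean integral
\begin{equation*}
\int_{\mathbb{R}^{l}}\frac{\partial u_{P}}{\partial y_{i}}(\bm{y})\,e^{-u_{P}(\bm{y})}\,dy_{1}\cdots dy_{l}
 = -\int_{\mathbb{R}^{l}} \frac{\partial}{\partial y_{i}}\bigl(e^{-u_{P}(\bm{y})}\bigr)\,dy_{1}\cdots dy_{l},
\end{equation*}
which vanishes by the fundamental theorem of calculus in the $y_{i}$-direction, provided $e^{-u_{P}}\to 0$ at infinity.

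The only substantive point is therefore the decay of $e^{-u_{P}}$. Because $\bm{0}\in\mathrm{Int}(P^{*})$ (an immediate consequence of the Fano polytope axioms on $P$), the lattice points of $P^{*}$ span $\mathbb{R}^{l}$ sufficiently so that $u_{P}(\bm{y}) = \log\sum_{\bm{a}\in P^{*}\cap \mathbb{Z}^{l}}e^{\langle\bm{a},\bm{y}\rangle}$ grows at least linearly as $|\bm{y}|\to\infty$ in every direction; consequently $e^{-u_{P}}$ decays exponentially and the integration by parts is legitimate. This is the main (mild) technical step; the remainder is direct bookkeeping.
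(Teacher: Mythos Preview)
Your proposal is correct and follows essentially the same route as the paper: both reduce to showing $b_{v_i}=c_{v_i}$ via the integral identity $\int_{\Z}(-\theta_{v_i}^{(\omega_{\refe})}+c_{v_i})e^{\rho_{\omega_{\refe}}}\omega_{\refe}^{n+l}=0$, then use \eqref{HessuP} and the fiberwise structure to reduce to $\int_{\mathbb{R}^l}\frac{\partial u_P}{\partial y_i}\,e^{-u_P}\,\bm{dy}=0$. The paper simply asserts this last vanishing in one line, whereas you justify it explicitly via integration by parts and the exponential decay of $e^{-u_P}$ (which follows since the vertices of $P^{*}$ are lattice points with $\bm{0}$ in the interior of their convex hull); so your argument is the same as the paper's, with this mild technical point spelled out.
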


\begin{proof}
We compare $b_{v_i}$ in \eqref{bv} and $c_{v_i}$.
It follows from the equation \eqref{intLap2} in Lemma~\ref{cv} that
$$
\int_{\Z}( -\theta_{v_{i}}^{(\omega_{\refe})}+c_{v_{i}})
e^{\rho_{\omega_{\refe}}}\omega_{\refe}^{n+l}=0.
$$
Recall  each fiber of $\widetilde{\pi}_{\mathfrak{M}}:
Z_{\mathfrak{M}}\to W$ is isomorphic to the toric Fano manifold $X_{P}$.
Note that in the above integral, the integrals along each fiber are all
equal. Thus by \eqref{HessuP}, the above equality is reduced to
\begin{eqnarray*}
0
&=&\int_{\mathbb{R}^{l}} ( -\theta_{v_{i}}^{(\omega_{\refe})}(\bm{y})
+c_{v_{i}})e^{-u_{P}(\bm{y})}\bm{dy} \\
&=&\int_{\mathbb{R}^{l}} \left(\frac{\partial u_{P}}{\partial y_{i}}
(\bm{y})-b_{v_i}+c_{v_i}\right) e^{-u_P(\bm{y})}\bm{dy} \\
&=&\int_{\mathbb{R}^{l}} (-b_{v_i}+c_{v_i}) e^{-u_P(\bm{y})}\bm{dy}.
\end{eqnarray*}
Therefore $b_{v_i}=c_{v_i}.$
\end{proof}

According to the above lemma, 
the potential function $\theta_{V}^{(\omega_{\refe})}$ for the
fiber-directed holomorphic vector field $V$ on $\Z$
is written as
\begin{equation}\label{thetaV}
\theta_{V}^{(\omega_{\refe})}(\bm{x})
=-\sum_{k=1}^{l}c_{k}\frac{\partial u_{P}}{\partial y_{k}}(\bm{x})+C_{V}
=-\langle \bm{c},\bm{m}_{0}(\bm{x})\rangle +C_{V}
\end{equation}
on $Q_{\mathfrak{M}}$,
where $\bm{c}=(c_{1},\dotsc,c_{l})\in\mathbb{R}^{l}$ is the unique
constants satisfying $V=\sum_{k}c_{k}v_{k}$, and
$C_{V}=\sum_{k}c_{k}c_{v_{k}}$

\begin{theorem}\label{ThmA2}
For a fiber-directed holomorphic vector field $V$ on $Z_{\mathfrak{M}}$,
the followings are equivalent.
\begin{enumerate}
\item[(i)] The KSM-manifold $Z_{\mathfrak{M}}$ admits an
	   $(S^{1})^{l}$-invariant multiplier Hermitian-Einstein metric 
$\widetilde{\omega}=
\omega\exp(-\frac{1}{n}\sigma(\theta_{V}^{(\omega)}))$
	   of type $(\sigma,V)$.
\item[(ii)] The $(\sigma,V)$-Futaki invariant vanishes for any
	    fiber-directed holomorphic vector field.
\item[(iii)] For any $k=1,2,\dotsc, l$, we have
$$
\int_{P^{*}}z_k\prod_{\alpha=1}^{n}
(1+\langle \bm{\mu}_{\alpha}, \bm{z} \rangle ) 
e^{-\sigma (-\langle \bm{c},\bm{z} \rangle +C_{V})}\bm{dz}=0.
$$
\end{enumerate}
\end{theorem}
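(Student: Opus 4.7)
The plan is to prove the cycle $(\mathrm{i}) \Rightarrow (\mathrm{ii}) \Leftrightarrow (\mathrm{iii}) \Rightarrow (\mathrm{i})$. The first implication is formal: a multiplier Hermitian-Einstein metric satisfies $\rho_{\omega}+\sigma(\theta_{V}^{(\omega)})\equiv 0$, whence $\mathrm{Fut}_{V}^{\sigma}$ vanishes on the full centralizer of $V$, which contains every fiber-directed vector field since $V$ is itself fiber-directed.

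The core of the proof is the equivalence $(\mathrm{ii}) \Leftrightarrow (\mathrm{iii})$, which I would obtain by explicitly evaluating $\mathrm{Fut}_{V}^{\sigma}(v_{k})$ on each generator $v_{k}$ of the fiber torus with $\omega=\omega_{\refe}$ in Lemma~\ref{Fut2}. By Lemma~\ref{thetacv} the factor $\theta_{v_{k}}^{(\omega_{\refe})}-c_{v_{k}}$ becomes $-\partial u_{P}/\partial y_{k}$, and~\eqref{thetaV} rewrites $\Poteo$ as $-\langle\bm{c},\bm{m}_{0}(\bm{x})\rangle+C_{V}$. Substituting the explicit form of $\omega_{\refe}^{n+l}$ from~\eqref{ref2}, the integrand is $(S^{1})^{l}$-invariant and depends only on $\bm{y}$ along each fiber; integrating out the $W$-direction first (all fiber integrals are equal by the KSM geometry, up to a global factor $C_{W}=\mathrm{Vol}(W,\nu_{0})$ times combinatorial constants) yields
\begin{equation*}
\mathrm{Fut}_{V}^{\sigma}(v_{k})
= C_{W}\int_{\mathbb{R}^{l}}\frac{\partial u_{P}}{\partial y_{k}}(\bm{y})\,
\det\!\left(\tfrac{\partial^{2}u_{P}}{\partial y_{i}\partial y_{j}}(\bm{y})\right)
\prod_{\alpha=1}^{n}\bigl(1+\langle\bm{\mu}_{\alpha},\bm{m}_{0}(\bm{y})\rangle\bigr)\,
e^{-\sigma(-\langle\bm{c},\bm{m}_{0}(\bm{y})\rangle+C_{V})}\,\bm{dy}.
\end{equation*}
The change of variables $\bm{z}=\bm{m}_{0}(\bm{y})=\nabla u_{P}(\bm{y})$ is a diffeomorphism onto $\mathrm{Int}(P^{*})$ with Jacobian $\det(\partial^{2}u_{P}/\partial y_{i}\partial y_{j})$; it sends $\partial u_{P}/\partial y_{k}$ to $z_{k}$ and absorbs the Hessian determinant against $\bm{dy}$. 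The resulting expression is exactly (a positive multiple of) the integral in~(iii), and since the fiber-directed vector fields are linearly spanned by the $v_{k}$, condition~(ii) is equivalent to~(iii).

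For the existence direction $(\mathrm{iii}) \Rightarrow (\mathrm{i})$, which I expect to be the main obstacle, my plan is to invoke Theorem~\ref{ThmC}: once~(iii) holds, the $(\sigma,V)$-Ding functional $D_{V}^{\sigma}$ is coercive on the space of $(S^{1})^{l}$-invariant fiber-directed admissible potentials. A standard variational argument then produces a minimizer whose Euler-Lagrange equation is precisely $\rho_{\omega}+\sigma(\theta_{V}^{(\omega)})=0$. The delicate point is passing from a variational minimizer to a genuine smooth multiplier Hermitian-Einstein metric: using the KSM structure one reduces the complex Monge-Amp\`{e}re equation fiberwise to a real Monge-Amp\`{e}re equation on $P^{*}$ with weight $\prod_{\alpha}(1+\langle\bm{\mu}_{\alpha},\bm{z}\rangle)e^{-\sigma(-\langle\bm{c},\bm{z}\rangle+C_{V})}$, and applies the regularity theory of weighted toric real Monge-Amp\`{e}re equations (in the spirit of Wang-Zhu, adapted to the KSM setting), using the convexity hypotheses on $\sigma$ to control asymptotics near $\partial P^{*}$. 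The normalization $C_{V}$ and the barycenter-type condition~(iii) are exactly what match the integrals on both sides of the reduced real Monge-Amp\`{e}re equation, enabling the solvability.
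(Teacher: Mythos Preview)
Your treatment of $(\mathrm{ii}) \Leftrightarrow (\mathrm{iii})$ coincides with the paper's: Lemma~\ref{Fut2} combined with Lemma~\ref{thetacv} and the Legendre change of variables $\bm{z}=\nabla u_{P}(\bm{y})$ is exactly the computation carried out there.

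For $(\mathrm{iii}) \Rightarrow (\mathrm{i})$ the paper takes a shorter path than you propose. Rather than passing through Theorem~\ref{ThmC} and running a variational argument, the paper first writes down the reduced real Monge-Amp\`{e}re equation $g(\nabla u)\det(\nabla^{2}u)=e^{-u}$ on $\mathbb{R}^{l}$ (your reduction does the same) and then invokes Berman--Berndtsson \cite[Theorem~1.1]{BB13} as a black box: that theorem already packages coercivity, existence of a minimizer, and its regularity for this toric equation, and its sole hypothesis is that the barycenter of $P^{*}$ with respect to $g(\bm{z})\,\bm{dz}$ be the origin---exactly condition~(iii). Smoothness on $\Z$ is then obtained from \cite[Section~3.8]{BB13} (or alternatively \cite{SaTa}). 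Your plan via Theorem~\ref{ThmC} is not circular (the paper emphasizes that Theorem~\ref{ThmC} is proved without using multiplier Hermitian-Einstein metrics, and indeed remarks that its coercivity estimate is the engine behind \cite[Theorem~1.1]{BB13}) and would succeed, but it amounts to re-deriving the Berman--Berndtsson argument rather than citing it. One small correction: the regularity step you need is not ``in the spirit of Wang--Zhu'' (that is a continuity-method argument), but rather the variational regularity for toric real Monge-Amp\`{e}re equations supplied in \cite{BB13} or the flow-based regularity of \cite{SaTa}.
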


\begin{proof}
It suffices to show the two directions (ii) $\Rightarrow$ (iii) and
(iii) $\Rightarrow$ (i).
First, we prove (ii) $\Rightarrow$ (iii).
By Lemma~\ref{Fut2}, for the fiber-directed holomorphic vector field
$v_{i}$ on $\Z$ corresponds to
$t_{i}\frac{\partial}{\partial t_{i}}\in\mathrm{Lie}
((\mathbb{C}^{*})^{l})$,
we have
$$
\mathrm{Fut}_{\Z}^{\sigma}(v_{i})
=\int_{\Z}( \theta_{v_{i}}^{(\omega_{\refe})}-c_{v_{i}} )
e^{-\sigma(\Poteo)} \omega_{\refe}^{n+l}=0.
$$
Since the integral along a fiber equals to one another, it follows from
Lemma~\ref{thetacv} that the above equality is reduced to
$$
\int_{\mathbb{R}^{l}}\frac{\partial u_{P}}{\partial y_{i}}
e^{-\sigma(-\langle \bm{c},\nabla u_{P} \rangle +C_{V})}
\det(\nabla^{2}u_{P}) 
\prod_{\alpha=1}^{n}(1+\langle\bm{\mu}_{\alpha},
\nabla u_{P}\rangle)\bm{dy}=0.
$$
Recall that the image of
$\bm{m}_0(\bm{x})=\left(\frac{\partial{u_{P}}}{\partial{y_{1}}}(\bm{x}),
\dotsc,\frac{\partial{u_{P}}}{\partial{y_{l}}}(\bm{x})\right)$
on a fiber $\widetilde{\pi}^{-1}_{\mathfrak{M}}(w)$ of $\Z$ is the dual
polytope $P^{*}$.
By taking the Legendre transformation
$z_{i}=\frac{\partial u_{P}}{\partial y_{i}}(\bm{y})\,\,(i=1,\dotsc,l)$,
we have
$$
\int_{P^{*}}z_k\prod_{\alpha=1}^{n}
(1+\langle \bm{\mu}_{\alpha}, \bm{z} \rangle ) 
e^{-\sigma (-\langle \bm{c},\bm{z} \rangle +C_{V})}\bm{dz}=0.
$$

Secondly we prove (iii) $\Rightarrow$ (i).
As in the proof of Theorem~\ref{KSMFano}, take any
$\varphi\in\mathcal{F}_{\mathfrak{M}}$,
and put $u=u_P+\varphi$ and
$$\eta_{u}=
\frac{(n+1)!}{n!}e^{-u(\bm{x})}
\bigwedge_{i=1}^{l}\left(\sqrt{-1}\frac{d\tau_{i}}{\tau_{i}}\wedge
\frac{d\overline{\tau_{i}}}{\overline{\tau_{i}}}\right) 
\wedge (\pi^{*}_{\mathfrak{M}}\nu_{0})^{n}.
$$
Then $\eta_{u}$ defines a volume form on
$Q_{\mathfrak{M}}$ and extends that on $\Z$.
If $u(\bm{y})$ is strictly convex on $\mathbb{R}^{l}$, 
then $\omega_{u}:=-\deldel\log\eta_{u}\in{2\pi{c_1(\Z)}}$ defines a
K\"{a}hler metric on $Q_{\mathfrak{M}}$.
Note that
$$
\theta_{V}^{(\omega_u)}(\bm{x}) 
=\theta_{V}^{(\omega_{\refe})}(\bm{x}) + V(\varphi(\bm{x}))
=-\sum_{k=1}^{l}c_{k}\frac{\partial u}{\partial y_{k}}(\bm{x})+C_{V}
$$
on $Q_{\mathfrak{M}}$.
If $u=u_{P}+\varphi\,(\varphi\in\mathcal{F}_{\mathfrak{M}})$ solves the
equation of a multiplier Hermitian-Einstein metric,
then by a same calculation as to get the equation \eqref{HessuP}, we
have
$$
\det\left(
\frac{\partial^{2}u}{\partial y_{i}\partial y_{j}}(\bm{x})\right)
\prod_{\alpha=1}^{n}\left(  1+\sum_{k=1}^{l} \mu_{\alpha}^{(k)}
\frac{\partial u}{\partial y_{k}}(\bm{x}) \right)
=e^{-u(\bm{x})+\sigma(\theta_{V}^{(\omega_{u})}(\bm{x}))}.
$$
Therefore, in order to construct a multiplier Hermitian-Einstein metric
on $\Z$, it suffice to solve the real Monge-Amp\`{e}re equation
\begin{equation}\label{gMAeq}
g\left(\nabla u(\bm{y})\big)\det\big(\nabla^{2}u(\bm{y})\right)
=e^{-u(\bm{y})}
\end{equation}
for a convex function $u$ on $\mathbb{R}^{l}$, where
\begin{equation}\label{g-function}
g(\bm{z}):=\prod_{\alpha=1}^{n}
\left(1+\langle \bm{\mu}_{\alpha},\bm{z}\rangle\right)
e^{-\sigma(-\langle \bm{c},\bm{z}\rangle +C_{V})}
\end{equation}
with $\bm{z}=\nabla u(\bm{y})$.
By a result of Berman-Berndtsson \cite[Theorem~1.1]{BB13},
the solvability of \eqref{gMAeq} is guaranteed
under the assumption that the barycenter of $P^{*}$ with respect to 
the measure $g(\bm{z})\bm{dz}$ is equal to the origin.
This assumption is nothing but the condition (iii).
Moreover the same argument as in \cite[Section~3.8]{BB13} allows us to
conclude that 
$u$ defines a smooth multiplier Hermitian-Einstein metric on $\Z$ of
type $(\sigma,V)$
(see also \cite[Theorem~1.1]{SaTa} for another regularity argument by an
application of a geometric flow, which is based on
\cite{SoTi17,SzTo11}).
This completes the proof of Theorem~\ref{ThmA2}.
\end{proof}

\section{The $(\sigma,V)$-Ding functional and the
$(\sigma,V)$-D-polystability for KSM-manifolds}\label{section:stability}

In this section, we introduce the notion of the $(\sigma,V)$-D-stability
for KSM-manifolds to prove Theorem~\ref{ThmB}.
Let $\mathfrak{M}=(W;L_{1},L_{2},\dotsc,L_{l};P)$ be an
$(n,l)$-dimensional KSM-data
and $Z_{\mathfrak{M}}$ be the associated KSM-manifold with a
fiber-directed holomorphic vector field $V$.
We use same notations as in the previous sections.

\subsection{The $(\sigma,V)$-Ding functional for $\Z$}

As in the proof of Theorem~\ref{ThmA2},
the equation to construct a multiplier Hermitian-Einstein metric on a
KSM-manifold $\Z$ is reduced to that for convex functions on
$\mathbb{R}^{l}$.
The $(\sigma,V)$-Ding functional $\mathrm{Ding}_{V}^{\sigma}$ should
also be reduced to a functional for convex functions.
Following \cite{BB13, CGSZ19} (see also \cite[Section~3.4]{Ya21}),
we define classes of convex functions on $\mathbb{R}^{l}$ to discuss
$\mathrm{Ding}_{V}^{\sigma}$ on $\Z$.
Let $v_{P^{*}}(\bm{y})=\sup_{\bm{z}\in P^{*}}\langle \bm{y},\bm{z}
\rangle$ be the support function for the dual polytope $P^{*}$ of an
$l$-dimensional Fano polytope $P$.
For a convex function $u(\bm{y})$ on $\mathbb{R}^{l}$, let
$u^{*}(\bm{z}):=\sup_{\bm{y}\in\mathbb{R}^{l}}
(\langle \bm{y},\bm{z}\rangle - u(\bm{y}))$
be its Legendre dual. We set
\begin{align*}
&\mathrm{PSH}(P^{*}):=\Set{u:\mathbb{R}^{l}\to\mathbb{R}\,;\,
\text{convex}\,|\,u\leqq v_{P^{*}}+C\text{ on }
\mathbb{R}^{l} \text{ for some constant }C},\\
&\mathcal{E}^{1}(P^{*}):=\Set{u\in\mathrm{PSH}(P^{*})\,|\,
\int_{P^{*}}u^{*}\bm{dz} < + \infty },\\
&\mathrm{PSH}_{b}(P^{*}):=\Set{u : \mathbb{R}^{l}\to\mathbb{R}\,;\,
\text{convex}\,|\,|u-v_{P^{*}}|<C \text{ on } \mathbb{R}^{l}
\text{ for some constant } C}.
\end{align*}
An element $u\in\mathrm{PSH}(P^{*})$ corresponds to a torus invariant
plurisubharmonic (psh) metric $e^{-u}$
on the anti-canonical bundle of the toric Fano manifold $X_{P}$
associated with $P$.
An element in $\mathcal{E}^{1}(P^{*})$ is characterized by the
finiteness of the Monge-Amp\'{e}re energy for $X_{P}$.
An element in $\bPSH$ corresponds to a bounded psh function on $X_{P}$.
In these terminology, we have
$\bPSH\subset\Eone\subset\mathrm{PSH}(P^{*})$.
Recall that $u\in\mathrm{PSH}(P^{*})$ defines a psh metric for the
KSM-manifold $\Z$ as in the proof of Theorems~\ref{KSMFano} and
\ref{ThmA2}.
Indeed, we define on $Q_{\mathfrak{M}}$,
$$
\eta_{u}=
\frac{(n+l)!}{n!}e^{-u(\bm{x})}
\bigwedge_{i=1}^{l}\left(\sqrt{-1}
\frac{d\tau_{i}}{\tau_{i}}\wedge
\frac{d\overline{\tau_{i}}}{\overline{\tau_{i}}}\right)
\wedge (\pi^{*}_{\mathfrak{M}}\nu_{0})^{n},
$$
and this can be extended to an invariant psh metric on the
anti-canonical bundle of $\Z$.

For $u\in\mathcal{E}^{1}(P^{*})$, we set
\begin{equation}\label{ding-KSM}
D_V^{\sigma}(u):=
\frac{1}{|P^{*}|_{V}^{\sigma}}\int_{P^{*}}u^{*}(\bm{z})g(\bm{z})\bm{dz}
-\log\int_{\mathbb{R}^{l}}e^{-u(\bm{y})}\bm{dy},
\end{equation}
where $g(\bm{z})$ is the function defined by \eqref{g-function},
$\bm{c}$ and $C_{V}$ are unique constants determined by \eqref{thetaV}
and $|P^{*}|_{V}^{\sigma}:=\int_{P^{*}}g(\bm{z})\bm{dz}$
is the volume.
It is easy to see that the functional $D_{V}^{\sigma}$ is the
restriction of the $(\sigma,V)$-Ding functional
$\mathrm{Ding}_{V}^{\sigma}$
(modulo a positive multiplicative constant and an additive constant)
to $\mathcal{E}^{1}(P^{*})$.
Indeed we have
$$
\delta D_{V}^{\sigma}(\delta u)
=\int_{\mathbb{R}^{l}}-\delta u 
\left(  
\frac{1}{|P^{*}|_{V}^{\sigma}}g(\nabla u)
\det(\nabla^{2}u)-\frac{e^{-u}}{\int_{\mathbb{R}^{l}}e^{-u}}
\right)\bm{dy}
$$
for a smooth strictly convex function $u$
by using the change of variables $\bm{z}=\nabla u(\bm{y})$ and the
derivative formula $\delta u = -\delta u^{*}(\nabla u)$,
and we see that a critical point of $D_{V}^{\sigma}$ satisfies the
(rescaled) real Monge-Amp\`{e}re equation 
to define a multiplier Hermitian-Einstein metric on $\Z$.
For simplicity, we call the functional $D_{V}^{\sigma}$ the
$(\sigma,V)$-Ding functional for $\Z$, if there is no fear of
confusion.

\subsection{Toric geodesics and the $(\sigma,V)$-D-stability}
\label{toricgeod}

Mabuchi~\cite{Ma87b} introduced the $L^{2}$ metric on the space of
smooth K\"{a}hler metrics to consider geodesics.
Smooth geodesics with respect to this $L^{2}$ metric do not
necessarily exist.
Darvas~\cite{Dar15} introduced a weak notion of geodesics, called the
finite energy geodesics.
In the case of toric Fano manifolds, the geodesics of invariant metrics
on $\mathcal{E}^{1}(P^{*})$ is considered.
These are called the {\it toric geodesics}, and are constructed by the
Legendre duality as follows.
See \cite{RWN14, SZ12} for more details.
For $u_{0}\in\Eone$, a toric geodesic ray $u_{t}$ started from $u_{0}$
is given by
$$
u_{t}=\left(u_{0}^{*}+t\phi\right)^{*}\in\Eone
$$ 
for $t\geqq{0}$, where $\phi:\mathrm{Int}(P^{*})\to\mathbb{R}$ is an
integrable convex function.
It is known that $u_{t}(\bm{y})$ is convex in $(\bm{y}, t)$.
In our case, a toric geodesic ray $u_{t}$ gives a geodesic ray in the
space of metrics on the KSM-manifold $\Z$ by considering the metric
$\eta_{u_{t}}$.

We next review the notion of test configurations.
See for instance \cite{Be16} for more details.
Let $M$ be a Fano manifold.
A test configuration $(\mathcal{M,L})$ for $(M, K_{M}^{-1})$ is a proper
normal variety $\mathcal{M}$
with a $\mathbb{Q}$-line bundle $\mathcal{L}$ and a morphism $\pi:
\mathcal{M}\to\mathbb{P}^{1}(\mathbb{C})$ satisfying the followings.
(i) There exists a linearized $\mathbb{C}^{*}$-action on
$(\mathcal{M,L})$ such that $\pi$ is equivariant with respect to the
multiplicative $\mathbb{C}^{*}$-action on $\mathbb{P}^{1}(\mathbb{C})$.
(ii) There exists an equivariant isomorphism with a trivial
$\mathbb{C}^{*}$-action between
$(\mathcal{M,L})|_{\mathbb{P}^{1}(\mathbb{C})\setminus\{0\}}$ and
$(M\times\mathbb{P}^{1}(\mathbb{C})\setminus\{0\},\pi^{*}K_{M}^{-1})$.
For a test configuration $(\mathcal{M,L})$,
a bounded psh geodesic ray started from a given bounded psh metric on
$K_{M}^{-1}$ is obtained by an envelope construction (\cite{Be16}).
The other way to construct such geodesic ray induced from
$(\mathcal{M,L})$ is known in \cite{CT08, RWN14, PS07}.

A test configuration $(\mathcal{M,L})$ for the toric Fano manifold
$M=X_{P}$ is a {\it toric test configuration}
if $\mathcal{M}$ is a toric variety itself with a
$(\mathbb{C}^{*})^{l}\times\mathbb{C}^{*}$-action,
where the first factor acts on each fiber of
$\mathcal{M}\to\mathbb{P}^{1}(\mathbb{C})$ as the action on $X_{P}$
and the second factor corresponds to the $\mathbb{C}^{*}$-action of the
test configuration.
According to Donaldson~\cite{Do02}, a toric test configuration is
obtained from a rational piecewise linear (PL) convex function on
$P^{*}$. Let
$$
\phi:=\max\{\lambda_{1},\dotsc,\lambda_{p}\}
$$
be a rational PL convex function on $P^{*}$,
where $\lambda_{r}$ is an affine function with rational coefficients and
$p$ is an integer.
Take a large integer $R$ such that
$$
\Box:=
\Set{(\bm{y}, y_{l+1})\in\mathbb{R}^{l}\oplus\mathbb{R}\,|\,
\bm{y}\in P^{*}\text{ and }0\leqq{y_{l+1}}\leqq R-\phi(\bm{y})}
$$
is an $(l+1)$-dimensional polytope,
and take a sufficiently divisible integer $r$ such that $r\Box$ is a
lattice polytope.
Then $r\Box$ defines a toric variety $\mathcal{M}$ with a line bundle
$\mathcal{L}^{\otimes{r}}$,
where $\mathcal{L}$ is a $\mathbb{Q}$-line bundle.
In fact, $(\mathcal{M,L})$ defines a toric test configuration for the
toric Fano manifold $X_{P}$.
According to Song-Zelditch~\cite{SZ12}, a toric test configuration given
by $(P,\phi,R)$ induces a toric geodesic ray in $\Eone$ defined by
\begin{equation}\label{geodfR}
u_{t}=\left(u_{0}^{*}+t(\phi-R)\right)^{*}.
\end{equation}

In the construction of a KSM-manifold $\Z$ associated to
$\mathfrak{M}=(W;L_1,\dotsc,L_{l};P)$, we can use 
$\widetilde{\mathfrak{M}}=
(W;L_1,\dotsc,L_{l},\mathcal{O}_{W};\left(r\Box\right)^{*})$,
although $\widetilde{\mathfrak{M}}$ is not a KSM-data.
Hence, we obtain
$\widetilde{\mathcal{M}}:=Z_{\widetilde{\mathfrak{M}}}
\subset\mathbb{P}(E_{\widetilde{\mathfrak{M}}})$, by abuse of notation,
and put
$$
\widetilde{\mathcal{L}}:=
\left(\mathcal{O}_{
\mathbb{P}(E_{\widetilde{\mathfrak{M}}})}(1)|_{\widetilde{\mathcal{M}}}
\right)^{\!\otimes(1/r)}\otimes
\left(\widetilde{\pi}_{\widetilde{\mathfrak{M}}}^{*}K_{W}^{-1}\right),
$$
where $\mathcal{O}_{\mathbb{P}(E_{\widetilde{\mathfrak{M}}})}(1)$ is the
relative hyperplane-section line bundle over
$\mathbb{P}(E_{\widetilde{\mathfrak{M}}})$.
Then the pair $(\widetilde{\mathcal{M}},\widetilde{\mathcal{L}})$
becomes a test configuration for $(\Z,K_{\Z}^{-1})$.
In this paper, we call such a test configuration for $(\Z,K_{\Z}^{-1})$
a {\it fiber-directed toric test configuration} for $\Z$.
A fiber-directed toric test configuration
$(\widetilde{\mathcal{M}},\widetilde{\mathcal{L}})$ for $\Z$ induces the
geodesic ray $\eta_{u_{t}}$
in the space of metrics on the anti-canonical line bundle for $\Z$,
where $u_{t}$ is the geodesic ray \eqref{geodfR}.

Now we observe the limit slope of the $(\sigma,V)$-Ding functional
$D_{V}^{\sigma}$ along a geodesic ray $\eta_{u_{t}}$ to define an
algebraic stability for $\Z$.

\begin{proposition}\label{Dslope}
Let $u_{t}:=(u_{P}^{*}+t(\phi-R))^{*}$ be the geodesic ray in $\Eone$
which is induced by a toric test configuration given by $(P,\phi,R)$.
Here $u_{P}$ is defined in \eqref{uP}.
Then we have
$$
\lim_{t\to\infty}\frac{D_{V}^{\sigma}(u_{t})}{t}=
\frac{1}{|P^{*}|_V^{\sigma}}
\int_{P^{*}}\phi(\bm{z})g(\bm{z})\bm{dz}-\phi(\bm{0})
$$
where $g(\bm{z})$ is the function defined in \eqref{g-function}.
\end{proposition}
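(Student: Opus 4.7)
\emph{Plan.} We split $D_{V}^{\sigma}(u_{t})$ via~\eqref{ding-KSM} into the energy term $\frac{1}{\Pvol}\int_{P^{*}}u_{t}^{*}g\,\bm{dz}$ and the log-integral term $-\log\int_{\mathbb{R}^{l}}e^{-u_{t}(\bm{y})}\bm{dy}$, and compute the $t\to\infty$ slope of each separately.

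For the energy term, invoke the involutivity of the Legendre transform on proper convex lower semi-continuous functions: since $u_{t}=(u_{P}^{*}+t(\phi-R))^{*}$ and $u_{P}^{*}+t(\phi-R)$ is itself proper convex lower semi-continuous (finite on $P^{*}$ and $+\infty$ outside), the double Legendre transform yields $u_{t}^{*}=u_{P}^{*}+t(\phi-R)$ on $P^{*}$. Consequently
\begin{equation*}
\frac{1}{\Pvol}\int_{P^{*}}u_{t}^{*}(\bm{z})g(\bm{z})\bm{dz}
=\frac{1}{\Pvol}\int_{P^{*}}u_{P}^{*}g\,\bm{dz}+t\left(\frac{1}{\Pvol}\int_{P^{*}}\phi g\,\bm{dz}-R\right)
\end{equation*}
is affine in $t$ with slope $\frac{1}{\Pvol}\int_{P^{*}}\phi g\,\bm{dz}-R$.

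For the log-integral term, we claim
$\lim_{t\to\infty}\frac{1}{t}\log\int_{\mathbb{R}^{l}}e^{-u_{t}(\bm{y})}\bm{dy}=\phi(\bm{0})-R$.
For the upper bound, fix $\epsilon\in(0,1)$, take $\bm{z}=\epsilon\bm{z}'\in P^{*}$ for an arbitrary $\bm{z}'\in P^{*}$ in the Legendre supremum defining $u_{t}(\bm{y})$, and use convexity of $\phi$ together with $\phi\leq R$ on $P^{*}$ to estimate $\phi(\epsilon\bm{z}')-R\leq(1-\epsilon)(\phi(\bm{0})-R)$. Taking the supremum over $\bm{z}'\in P^{*}$ yields
\begin{equation*}
u_{t}(\bm{y})\geq\epsilon v_{P^{*}}(\bm{y})+t(1-\epsilon)(R-\phi(\bm{0}))-\max_{P^{*}}u_{P}^{*}.
\end{equation*}
Since $\bm{0}\in\mathrm{Int}(P^{*})$, $v_{P^{*}}$ grows linearly in every direction, so $e^{-\epsilon v_{P^{*}}}$ is integrable on $\mathbb{R}^{l}$ and $\int_{\mathbb{R}^l} e^{-u_{t}}\bm{dy}\leq C_{\epsilon}\,e^{-t(1-\epsilon)(R-\phi(\bm{0}))}$. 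Letting $t\to\infty$ and then $\epsilon\to 0$ gives $\limsup\leq\phi(\bm{0})-R$. For the matching lower bound, observe that on the set $S_{t}:=\nabla u_{P}^{*}(\bm{0})+t\,\partial\phi(\bm{0})$ the Legendre supremum in $u_{t}(\bm{y})$ is attained at $\bm{z}=\bm{0}$, giving $u_{t}(\bm{y})=t(R-\phi(\bm{0}))-u_{P}^{*}(\bm{0})$ for all $\bm{y}\in S_{t}$. When $\partial\phi(\bm{0})$ has positive $l$-dimensional Lebesgue measure (i.e.\ $\phi$ has a genuine kink at $\bm{0}$), $|S_{t}|$ grows like $t^{l}$ and $\int_{\mathbb{R}^l} e^{-u_{t}}\bm{dy}\geq|S_{t}|\,e^{-t(R-\phi(\bm{0}))+u_{P}^{*}(\bm{0})}$ gives $\liminf\geq\phi(\bm{0})-R$ directly; in the remaining degenerate case one carries out a Laplace-type local quadratic expansion of $u_{t}$ near its unique minimizer, using that the Hessian $\nabla^{2}u_{P}^{*}(\bm{0})$ is positive definite and that $\nabla^{2}\phi$ vanishes in the region where $\phi$ is affine, so the matching lower bound still holds up to a subexponential prefactor.

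Combining the two slopes gives
$\lim_{t\to\infty}\frac{D_{V}^{\sigma}(u_{t})}{t}=\bigl(\frac{1}{\Pvol}\int_{P^{*}}\phi g\,\bm{dz}-R\bigr)-\bigl(\phi(\bm{0})-R\bigr)=\frac{1}{\Pvol}\int_{P^{*}}\phi(\bm{z})g(\bm{z})\bm{dz}-\phi(\bm{0})$, as claimed. The main obstacle is the lower-bound half of the log-integral asymptotic in the case where $\phi$ is affine in a neighborhood of $\bm{0}$: the subdifferential $\partial\phi(\bm{0})$ then reduces to a single point and one must execute a Laplace-type asymptotic analysis, analogous to the classical toric Ding-slope calculations that appear in Song--Zelditch \cite{SZ12} and Berman \cite{Be16}.
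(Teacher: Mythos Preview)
Your decomposition into the energy term and the log term is exactly what the paper does, and your treatment of the energy term via the Legendre involution $u_{t}^{*}=u_{P}^{*}+t(\phi-R)$ is the same (the paper simply says ``it is easy to see''). For the log term, the paper does not argue directly at all: it invokes \cite[Theorem~14]{Ya21} to obtain $\lim_{t\to\infty}\frac{1}{t}\log\int_{\mathbb{R}^{l}}e^{-u_{t}}\bm{dy}=\phi(\bm{0})-R$. So your proposal goes beyond the paper in attempting a self-contained proof of that asymptotic.

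Your upper bound is clean and correct. For the lower bound, your argument via $S_{t}=\nabla u_{P}^{*}(\bm{0})+t\,\partial\phi(\bm{0})$ is valid when $\partial\phi(\bm{0})$ has full $l$-dimensional measure, but be aware that this is actually the \emph{exceptional} situation for a rational PL function: generically the origin lies in the interior of a single affine chamber of $\phi$, so $\partial\phi(\bm{0})$ is a point and your ``degenerate case'' is in fact the typical one. The Laplace-type sketch you give (minimizer $\bm{y}_{t}=\nabla u_{P}^{*}(\bm{0})+t\nabla\phi(\bm{0})$, Hessian $\nabla^{2}u_{t}^{*}(\bm{0})=\nabla^{2}u_{P}^{*}(\bm{0})$ independent of $t$) is the right heuristic and can be made rigorous, but as written it is not a proof. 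Since the paper itself outsources this entire step to \cite{Ya21}, your proposal is no less complete than the paper's own argument; if you want it self-contained, the cleanest route is to note that when $\phi$ is affine near $\bm{0}$, say $\phi=\lambda_{1}$ there, one has $u_{t}\leqq(u_{P}^{*}+t(\lambda_{1}-R))^{*}=u_{P}(\cdot-t\nabla\lambda_{1})+t(R-\lambda_{1}(\bm{0}))$ globally (since $\phi\geqq\lambda_{1}$ on $P^{*}$), which immediately gives $\int e^{-u_{t}}\geqq e^{-t(R-\phi(\bm{0}))}\int e^{-u_{P}}$ without any Laplace analysis.
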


\begin{proof}
It is easy to see that
$$
\lim_{t\to\infty}\frac{1}{t}\int_{P^*}u_{t}^{*}(\bm{z})g\bm{dz}=
\int_{P^{*}}(\phi(\bm{z})-R)g(\bm{z})\bm{dz}.
$$
On the other hand, by \cite[Thoerem~14]{Ya21}, the limit slope of the
log term is given by
$$
\lim_{t\to\infty}\frac{1}{t}
\log\int_{\mathbb{R}^{l}}e^{-u_{t}(\bm{z})}\bm{dy}=\phi(\bm{0})-R.
$$
This completes the proof.
\end{proof}

For a fiber-directed toric test configuration
$(\widetilde{\mathcal{M}},\widetilde{\mathcal{L}})$ for $\Z$
given by $(P,\phi,R)$,
we define the {\it $(\sigma,V)$-Ding invariant} as
$$
\mathfrak{D}_{V}^{\sigma}
(\widetilde{\mathcal{M}},\widetilde{\mathcal{L}})
\left(=\mathfrak{D}_{V}^{\sigma}(\phi)\right):=
\frac{1}{|P^{*}|_{V}^{\sigma}}\int_{P^{*}}\phi(\bm{z})
g(\bm{z})
\bm{dz}-\phi(\bm{0}),
$$
where $g(\bm{z})$ is the function defined in \eqref{g-function}.

\begin{definition}
The KSM-manifold $\Z$ with fiber-directed $V$ is
{\it fiber-directed relative $(\sigma,V)$-D-polystable} if
$\mathfrak{D}_{V}^{\sigma}
(\widetilde{\mathcal{M}},\widetilde{\mathcal{L}})\geqq{0}$ for any
fiber-directed
toric test configuration
$(\widetilde{\mathcal{M}},\widetilde{\mathcal{L}})$,
and the equality holds if and only if the rational PL convex function
defining $(\widetilde{\mathcal{M}},\widetilde{\mathcal{L}})$ is affine
linear.
\end{definition}

Now we prove Theorem~\ref{ThmB}
which is a counter part of Theorem~\ref{ThmA} from view point of the
fiber-directed relative $(\sigma,V)$-D-polystability.

\begin{proof}[Proof of Theorem~\ref{ThmB}]
We consider the barycenter $\bm{b}_{g}$ of the dual polytope
$P^{*}$ with respect to the probability measure
$g\bm{dz}/|P^{*}|_{V}^{\sigma}$, that is,
$\bm{b}_{g}:=\frac{1}{|P^{*}|_{V}^{\sigma}}
\int_{P^{*}}\bm{z}g(\bm{z})\bm{dz}$.
Then the integral condition in Theorem~\ref{ThmA} is equivalent to
$\bm{b}_{g}=\bm{0}$.
We use Jensen's inequality which means that
$$
\phi(\bm{b}_{g})\leqq
\frac{1}{|P^{*}|_{V}^{\sigma}}\int_{P^{*}}\phi(\bm{z})g(\bm{z})\bm{dz}
$$
for any convex function $\phi$ on $P^{*}$, and that the equality holds
if and only if $\phi$ is affine linear.
Thus the condition $\bm{b}_{g}=\bm{0}$ implies the
fiber-directed relative ($\sigma, V$)-D-polystability for $\Z$.

Conversely, considering fiber-directed toric test configurations given
by $(P,\phi,R)$ and $(P,-\phi,R)$ with the coordinate function
$\phi(\bm{z})=z_{k}$,
we have the integral condition in Theorem~\ref{ThmA}.
This completes the proof.
\end{proof}

\section{Coercivity for the $(\sigma,V)$-Ding functional}
\label{section:coercive}

The coercivity estimate in Theorem~\ref{ThmC} plays an 
crucial role in the result of
Berman-Berndtsson~\cite[Theorem~1.1]{BB13} used in the proof of
Theorem~\ref{ThmA}. Theorem~\ref{ThmC} can be considered as an energy
theoretic version of Theorem~\ref{ThmA}.
Indeed, we emphasize that we do not use multiplier Hermitian-Einstein
metrics to prove Theorem~\ref{ThmC}.

\subsection{Reduced $J$-functional and $(\sigma,V)$-$J$-functional}

First, we define a function $h(\bm{z})$ on $P^{*}$ by
\begin{equation}\label{h-function}
h(\bm{z}):=\prod_{\alpha=1}^{n}\left(1+\langle
\bm{\mu}_{\alpha},\bm{z}\rangle\right).
\end{equation}
Then we have
$g(\bm{z})=
h(\bm{z})\exp\left(-\sigma(-\langle\bm{c},\bm{z}\rangle+C_{V})\right)$,
where $g(\bm{z})$ is the function defined by \eqref{g-function}.
Note that both $h(\bm{z})$ and $g(\bm{z})$ are positive functions on
$P^{*}$.
In order to define the coercivity for $D_{V}^{\sigma}$,
we introduce the {\it reduced $J$-functional}
$J_{\mathrm{red}}$ and
the {\it reduced $(\sigma,V)$-$J$-functional}
$J_{V,\mathrm{red}}^{\sigma}$ on $\Eone$ as follows:
\begin{align*}
&\JfctORG(u):=\inf_{\ell}
\left\{\frac{1}{|P^{*}|_{h}}
\int_{P^{*}}(u^{*}(\bm{z})-\ell(\bm{z}))h(\bm{z})\bm{dz}
-\inf_{P^{*}}(u^{*}-\ell)\right\},\\
&\Jfct(u):=\inf_{\ell}
\left\{\frac{1}{|P^{*}|_{V}^{\sigma}}
\int_{P^{*}}(u^{*}(\bm{z})-\ell(\bm{z}))g(\bm{z})\bm{dz}
-\inf_{P^{*}}(u^{*}-\ell)\right\},
\end{align*}
where $\ell$ runs through arbitrary affine functions on $P^{*}$,
and $|P^{*}|_{h}:=\int_{P^{*}}h(\bm{z})\bm{dz}$.

\begin{definition}\label{def:coercive}
The $(\sigma,V)$-Ding functional $D_{V}^{\sigma}$ is {\it coercive} if
there exist positive constants $\delta,C>0$ such that 
$$
D_{V}^{\sigma}(u)\geqq\delta\JfctORG(u)-C
$$
for any $u\in\Eone$.
\end{definition}

\noindent
For a convex function $\psi$ on a convex subset $E$ of $\mathbb{R}^{l}$
and $z_{0}\in{E}$, we put
$$
\partial\psi(\bm{z}_{0}):=
\left\{\bm{a}\in\mathbb{R}^{l}\,|\,
\psi(\bm{z})\geqq\langle\bm{a},\bm{z}-\bm{z}_{0}\rangle+u(\bm{z}_{0})
\text{ on }E\right\}.
$$
The convexity of $\psi$ implies
$\partial\psi(\bm{z}_{0})\ne\varnothing$
for any $\bm{z}_{0}\in{E}$. Since
$$
\exp\left(-\sigma(-\langle\bm{c},\bm{z}\rangle+C_{V})\right)>0
$$
on $P^{*}$, there exist positive constants $A,B>0$ such that
\begin{equation}\label{positivity}
A\leqq
\exp\left(-\sigma(-\langle\bm{c},\bm{z}\rangle+C_{V})\right)
\leqq{B}\qquad
(\bm{z}\in{P^{*}}).
\end{equation}
In view of an argument similar to that
in \cite[Proof of Propositon~27]{Ya21},
this estimate \eqref{positivity} allows us to obtain
\begin{equation}\label{J-relation1}
\frac{|P^{*}|_{h}A}{|P^{*}|_{V}^{\sigma}}\JfctORG(u)\leqq
\Jfct(u)\leqq\frac{|P^{*}|_{h}B}{|P^{*}|_{V}^{\sigma}}\JfctORG(u),
\end{equation}
for any $u\in\Eone$.

Now, we can prove the following lemma:

\begin{lemma}\label{J-relation2}
If the integral condition \eqref{barycenter} holds, then we have
$$
\JfctORG(u)\leqq
\frac{1}{|P^{*}|_{h}}
\int_{P^{*}}\left(u^{*}(\bm{z})
-\langle\bm{a},\bm{z}\rangle-u^{*}(\bm{0})\right)h(\bm{z})\bm{dz}
\leqq\frac{B}{A}\JfctORG(u),
$$
for any $u\in\Eone$ and $\bm{a}\in\partial{u^{*}}(\bm{0})$.
\end{lemma}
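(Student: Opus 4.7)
The plan is to establish the two claimed inequalities separately. The lower bound $\JfctORG(u)\leqq\frac{1}{|P^{*}|_{h}}\int_{P^{*}}(u^{*}(\bm{z})-\langle\bm{a},\bm{z}\rangle-u^{*}(\bm{0}))h(\bm{z})\bm{dz}$ will follow directly from the definition of $\JfctORG$ by taking the affine function $\ell(\bm{z}):=\langle\bm{a},\bm{z}\rangle+u^{*}(\bm{0})$ as a test function: since $\bm{a}\in\partial u^{*}(\bm{0})$, the subgradient inequality gives $u^{*}\geqq\ell$ on $P^{*}$ with equality at $\bm{0}\in\mathrm{Int}(P^{*})$, so $\inf_{P^{*}}(u^{*}-\ell)=0$ and the desired lower bound drops out.

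For the upper bound my plan is to pivot through the $g$-weighted functional $\Jfct(u)$. Setting $f(\bm{z}):=u^{*}(\bm{z})-\langle\bm{a},\bm{z}\rangle-u^{*}(\bm{0})\geqq 0$ as above, the pointwise bound $Ah\leqq g$ from \eqref{positivity} yields $\int_{P^{*}}fh\,\bm{dz}\leqq A^{-1}\int_{P^{*}}fg\,\bm{dz}$, and the barycenter condition \eqref{barycenter}, i.e.\ $\int_{P^{*}}\bm{z}\,g(\bm{z})\bm{dz}=\bm{0}$, cleanly kills the linear term in $f$ to give
$$
\int_{P^{*}}f\,g\,\bm{dz}
=\int_{P^{*}}u^{*}g\,\bm{dz}-u^{*}(\bm{0})\,|P^{*}|_{V}^{\sigma}.
$$

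The key intermediate step is the identity
$$
\Jfct(u)=\frac{1}{|P^{*}|_{V}^{\sigma}}\int_{P^{*}}u^{*}g\,\bm{dz}-u^{*}(\bm{0}),
$$
which holds precisely under the barycenter condition. I would derive it by writing a general affine $\ell(\bm{z})=\langle\bm{b},\bm{z}\rangle+c$, using $\frac{1}{|P^{*}|_{V}^{\sigma}}\int\ell g\,\bm{dz}=c=\ell(\bm{0})$ to rewrite the expression inside the infimum defining $\Jfct$ as $\frac{1}{|P^{*}|_{V}^{\sigma}}\int u^{*}g\,\bm{dz}-[\ell(\bm{0})+\inf_{P^{*}}(u^{*}-\ell)]$, and then noting that the bracketed quantity is bounded above by $(u^{*}-\ell)(\bm{0})+\ell(\bm{0})=u^{*}(\bm{0})$, with equality attained at $\ell_{\bm{a}}(\bm{z})=\langle\bm{a},\bm{z}\rangle+u^{*}(\bm{0})$. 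Chaining this identity with the already-established comparison \eqref{J-relation1} then gives
$$
\frac{1}{|P^{*}|_{h}}\int_{P^{*}}f\,h\,\bm{dz}
\leqq\frac{|P^{*}|_{V}^{\sigma}}{A\,|P^{*}|_{h}}\Jfct(u)
\leqq\frac{B}{A}\JfctORG(u),
$$
which is the desired upper bound. The only slightly non-trivial step, and thus the main obstacle, is recognizing the exact identification of $\Jfct(u)$ under the barycenter hypothesis; once that is in hand, the rest is a short chain combining \eqref{positivity} and \eqref{J-relation1}.
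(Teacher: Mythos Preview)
Your proposal is correct and follows essentially the same route as the paper. The only cosmetic differences are: (i) for the lower bound you plug the specific affine $\ell_{\bm{a}}$ directly into the infimum defining $\JfctORG$, whereas the paper first invokes the barycenter identity $\JfctORG(u)=\frac{1}{|P^{*}|_{h}}\int_{P^{*}}u^{*}h\,\bm{dz}-u^{*}(\bm{b}_{h})$ from \cite[Proposition~27]{Ya21} and then bounds $u^{*}(\bm{b}_{h})$ via the subgradient inequality; and (ii) you derive the companion identity $\Jfct(u)=\frac{1}{|P^{*}|_{V}^{\sigma}}\int_{P^{*}}u^{*}g\,\bm{dz}-u^{*}(\bm{0})$ by hand, while the paper simply cites the same reference. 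The chain through \eqref{positivity} and \eqref{J-relation1} for the upper bound is identical.
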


\begin{proof}
For any $u\in\Eone$ and $\bm{a}\in\partial{u^{*}}(\bm{0})$,
the same argument as that in \cite[Proposition~27]{Ya21}
allows us to obtain
\begin{align*}
&\JfctORG(u)=
\frac{1}{|P^{*}|_{h}}
\int_{P^{*}}u^{*}(\bm{z})h(\bm{z})\bm{dz}-u^{*}(\bm{b}_{h}),\\
&\Jfct(u)=
\frac{1}{|P^{*}|_{V}^{\sigma}}
\int_{P^{*}}u^{*}(\bm{z})g(\bm{z})\bm{dz}-u^{*}(\bm{b}_{g}),
\end{align*}
where
$\bm{b}_{h}:=\frac{1}{|P^{*}|_{h}}\int_{P^{*}}\bm{z}h(\bm{z})\bm{dz}$
and
$\bm{b}_{g}:=
\frac{1}{|P^{*}|_{V}^{\sigma}}\int_{P^{*}}\bm{z}g(\bm{z})\bm{dz}$
are the barycenters of $P^{*}$ with respect to the probability measures
$h\bm{dz}/|P^{*}|_{h}$ and $g\bm{dz}/|P^{*}|_{V}^{\sigma}$,
respectively. Note that the condition \eqref{barycenter} is equivalent
to $\bm{b}_{g}=\bm{0}$. Since
$u^{*}(\bm{z})\geqq\langle\bm{a},\bm{z}\rangle+u^{*}(\bm{0})$
on $P^{*}$, we have
\begin{align*}
u^{*}(\bm{b}_{h})&\geqq
\langle\bm{a},\bm{b}_{h}\rangle+u^{*}(\bm{0})\\
&=
\frac{1}{|P^{*}|_{h}}\int_{P^{*}}
\left(\langle\bm{a},\bm{z}\rangle+u^{*}(\bm{0})\right)h(\bm{z})\bm{dz}.
\end{align*}
Therefore, we have
\begin{align*}
\JfctORG(u)&=
\frac{1}{|P^{*}|_{h}}
\int_{P^{*}}u^{*}(\bm{z})h(\bm{z})\bm{dz}-u^{*}(\bm{b}_{h}),\\
&\leqq
\frac{1}{|P^{*}|_{h}}\int_{P^{*}}u^{*}(\bm{z})h(\bm{z})\bm{dz}-
\frac{1}{|P^{*}|_{h}}
\int_{P^{*}}
\left(\langle\bm{a},\bm{z}\rangle+u^{*}(\bm{0})\right)h(\bm{z})\bm{dz}\\
&=\frac{1}{|P^{*}|_{h}}
\int_{P^{*}}\left(u^{*}(\bm{z})
-\langle\bm{a},\bm{z}\rangle-u^{*}(\bm{0})\right)h(\bm{z})\bm{dz}.
\end{align*}

On the other hand, in view of the condition \eqref{barycenter} and the
inequalities \eqref{positivity} and \eqref{J-relation1}, we obtain
\begin{align*}
&\frac{1}{|P^{*}|_{h}}
\int_{P^{*}}\left(u^{*}(\bm{z})
-\langle\bm{a},\bm{z}\rangle-u^{*}(\bm{0})\right)h(\bm{z})\bm{dz}\\
&\leqq\frac{1}{|P^{*}|_{h}A}
\int_{P^{*}}\left(u^{*}(\bm{z})
-\langle\bm{a},\bm{z}\rangle-u^{*}(\bm{0})\right)g(\bm{z})\bm{dz}\\
&=\frac{1}{|P^{*}|_{h}A}
\int_{P^{*}}\left(u^{*}(\bm{z})
-u^{*}(\bm{b}_{g})\right)g(\bm{z})\bm{dz}\\
&=\frac{|P^{*}|_{V}^{\sigma}}{|P^{*}|_{h}A}\Jfct(u)\\
&\leqq\frac{B}{A}\JfctORG(u).
\end{align*}
This completes the proof.
\end{proof}

Finally we prove Theorem~\ref{ThmC} to conclude this section.

\begin{proof}[Proof of Theorem~\ref{ThmC}]
First, we assume that $\int_{P^{*}}z_{k}g\bm{dz}=0$ for any $k$, that
is, $\bm{b}_{g}=\bm{0}$, where $\bm{b}_{g}$ is the barycenter of $P^{*}$
with respect to the measure $\frac{1}{\Pvol}g\bm{dz}$.
For an arbitrary $u\in\Eone$, since $\bm{b}_{g}=\bm{0}$,
a simple calculation allows us to obtain
$$
D_{V}^{\sigma}((u^{*}+\ell)^{*})=D_{V}^{\sigma}(u),
$$
where $\ell$ is an any affine function.
Fix $\bm{a}\in\partial{u^{*}}(\bm{0})$ and we put
$$
\widetilde{u}^{*}(\bm{z}):=
u^{*}(\bm{z})-
\langle\bm{a},\bm{z}\rangle-u^{*}(\bm{0}).
$$
Then we have
$\widetilde{u}^{*}(\bm{z})\geqq\widetilde{u}^{*}(\bm{0})=0$ and
$D_{V}^{\sigma}(u)=D_{V}^{\sigma}((\widetilde{u}^{*})^{*})$.
In view of \eqref{positivity}, an argument similar to that in
\cite[Proposition~4.2]{Na19} allows us to obtain
$$
D_{V}^{\sigma}((\widetilde{u}^{*})^{*})\geqq
\delta\frac{1}{|P^{*}|_{h}}\int_{P^{*}}
\widetilde{u}^{*}(\bm{z})h(\bm{z})\bm{dz}-C
$$
for some positive constants $\delta,C>0$.
In view of Lemma~\ref{J-relation2}, we have
\begin{align*}
\frac{1}{|P^{*}|_{h}}\int_{P^{*}}\widetilde{u}^{*}(\bm{z})\bm{dz}&=
\frac{1}{|P^{*}|_{h}}\int_{P^{*}}
\left(u^{*}(\bm{z})-
\langle\bm{a},\bm{z}\rangle-u^{*}(\bm{0})\right)h(\bm{z})
\bm{dz}\geqq\JfctORG(u).
\end{align*}
Hence, we have the coercivity for $D_{V}^{\sigma}$.

Conversely, we assume that for some $k$ ($1\leqq{k}\leqq{l}$),
$$
a_{k}:=\frac{1}{|P^{*}|_{V}^{\sigma}}\int_{P^{*}}z_{k}g(\bm{z})\bm{dz}
\ne{0}.
$$
For an affine function $\ell$, by the simple calculation, we have
$$
D_{V}^{\sigma}(\ell^{*})=-\log{N_{0}}+
\frac{1}{|P^{*}|_{V}^{\sigma}}\int_{P^{*}}\ell(\bm{z})g(\bm{z})\bm{dz},
$$
where $N_{0}$ is the number of vertices of $P^{*}$
We put $\ell(\bm{z}):=-ra_{k}z_{k}$ for a positive constant $r>0$.
If we consider the case $r\to+\infty$, then we have
$$
D_{V}^{\sigma}(\ell^{*})=-\log{N_{0}}-ra_{k}^{2}\to-\infty,
$$
while $\JfctORG(u)\geqq{0}$ for any $u\in\Eone$.
This completes the proof of Theorem~\ref{ThmC}.

%
\end{proof}


A KSM-manifold $\Z$ is said to be
{\it fiber-directed uniformly relative D-stable} with respect to
$(\sigma,V)$, where $V$ is a fiber-directed holomorphic vector field on
$\Z$, if there exist a positive constant $\lambda>0$ such that 
$$
\mathfrak{D}_{V}^{\sigma}(\phi)\geqq\lambda\JfctORG(\phi^{*}),
$$
for any rational PL convex function $\phi$ on $P^{*}$.
Here $\mathfrak{D}_{V}^{\sigma}$ is the $(\sigma,V)$-Ding invariant
defined in Section~\ref{section:stability}.

Since the function $g(\bm{z})$ satisfies $g(\bm{z})\geqq{C}$ on
$P^{*}$ for some positive constant $C>0$,
Lemma~\ref{J-relation2} also allows us to obtain the following theorem:

\begin{theorem}
Suppose a holomorphic vector field $V$ on a KSM-manifold $\Z$ is
fiber-directed.
Then $\Z$ is fiber-directed uniformly relative D-stable
with respect to $(\sigma,V)$
if and only if the integral condition \eqref{barycenter} holds.
\end{theorem}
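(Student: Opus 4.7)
The plan is to mirror the two-direction argument already used for Theorem~\ref{ThmB} and Theorem~\ref{ThmC}, and to promote its conclusion to a uniform bound by combining Lemma~\ref{J-relation2} with the pointwise estimate $g(\bm{z}) \geq A h(\bm{z})$ on $P^{*}$ coming from \eqref{positivity}.

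For the only-if direction I would test fiber-directed uniform relative D-stability against the affine functions $\phi(\bm{z}) = \pm z_{k}$. Since such $\phi$ is affine, choosing $\ell = \phi$ in the variational definition of $\JfctORG$ forces $\JfctORG(\phi^{*}) = 0$, so the uniform inequality collapses to $\mathfrak{D}_{V}^{\sigma}(\pm z_{k}) \geq 0$ for every $k$. Exactly as in the proof of Theorem~\ref{ThmB}, this pair of inequalities forces the barycenter $\bm{b}_{g}$ of $P^{*}$ with respect to $g\bm{dz}/|P^{*}|_{V}^{\sigma}$ to vanish, which is the integral condition \eqref{barycenter}.

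For the converse I would assume \eqref{barycenter}, fix a rational PL convex function $\phi$ on $P^{*}$, and choose any $\bm{a} \in \partial\phi(\bm{0})$, which is nonempty because $\bm{0} \in \mathrm{Int}(P^{*})$. Since $\bm{b}_{g} = \bm{0}$ implies $\int_{P^{*}} \langle \bm{a},\bm{z}\rangle g(\bm{z})\bm{dz} = 0$, the Ding invariant rewrites as
\[
|P^{*}|_{V}^{\sigma}\,\mathfrak{D}_{V}^{\sigma}(\phi)
= \int_{P^{*}}\bigl(\phi(\bm{z}) - \langle\bm{a},\bm{z}\rangle - \phi(\bm{0})\bigr)g(\bm{z})\,\bm{dz}.
\]
The integrand is pointwise nonnegative on $P^{*}$ by the subgradient inequality, so replacing $g$ by its lower bound $A h$ and then applying the first inequality of Lemma~\ref{J-relation2} to $u = \phi^{*}$ (whose Legendre dual is $\phi^{**} = \phi$) yields
\begin{align*}
|P^{*}|_{V}^{\sigma}\,\mathfrak{D}_{V}^{\sigma}(\phi)
&\geq A \int_{P^{*}}\bigl(\phi(\bm{z}) - \langle\bm{a},\bm{z}\rangle - \phi(\bm{0})\bigr)h(\bm{z})\,\bm{dz}\\
&\geq A\,|P^{*}|_{h}\,\JfctORG(\phi^{*}),
\end{align*}
which is the desired uniform bound with $\lambda = A|P^{*}|_{h}/|P^{*}|_{V}^{\sigma} > 0$.

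The only step requiring a short verification is that Lemma~\ref{J-relation2} may be applied to $u = \phi^{*}$: since $\phi$ is continuous, convex and bounded on the compact set $P^{*}$, one has $\phi^{*} \in \bPSH \subset \Eone$ and $\phi^{**} = \phi$ by Fenchel biduality. Beyond this there is no serious analytic obstacle; the content of the proof is that the uniform positivity of $g$ relative to $h$ on $P^{*}$, combined with the reduced $J$-comparison of Lemma~\ref{J-relation2}, automatically upgrades the plain equivalence in Theorem~\ref{ThmB} to the required quantitative estimate.
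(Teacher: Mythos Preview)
Your argument is correct and follows exactly the route the paper indicates: the paper gives no detailed proof, stating only that the lower bound $g\geqq C>0$ on $P^{*}$ together with Lemma~\ref{J-relation2} yields the theorem, and your proposal is a clean unpacking of precisely that sentence (with the only-if direction handled via the affine test functions $\pm z_{k}$, as in Theorem~\ref{ThmB}). The one cosmetic difference is that you phrase the positivity as $g\geqq Ah$ via \eqref{positivity} rather than $g\geqq C$, but since $h$ is bounded above and below by positive constants on $P^{*}$ these are equivalent, and your formulation is in fact the one that plugs directly into Lemma~\ref{J-relation2}.
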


\begin{remark}
The fiber-directed uniformly relative D-stability also follows from the
coercivity estimate of $D_{V}^{\sigma}$.
Indeed we can consider the geodesic ray \eqref{geodfR} to calculate the
limit slope.
\end{remark}

\section{Non-uniformly stable case}\label{NUStable}

In this section, we consider the non-uniformly stable case,
inspired by \cite[Sections~6 and 7]{Ya21}.
Namely, let $\sigma(s)$ be a real-valued smooth function on the interval
$I=(\alpha,\beta)$ satisfying
\begin{align*}
&-\infty<\alpha=\min_{M}\theta_{V}^{(\omega)}
\leqq\max_{M}\theta_{V}^{(\omega)}<\beta\leqq+\infty\\
&\dot{\sigma}\leqq{0}\leqq\ddot{\sigma},\\
&\lim_{t\to\alpha+0}\sigma(t)=+\infty.
\end{align*}
In this case we have
$\exp\left(-\sigma(\theta_{V}^{(\omega)})\right)=0$
at some point of $M$.
Furthermore, for a KSM-manifold $\Z$ with a fiber-directed holomorphic
vector field $V$,
if the integral condition \eqref{barycenter} holds,
then we also have
$$
\mathfrak{D}_{V}^{\sigma}(\phi)\geqq{0},
$$
for any rational PL convex function $\phi$ on $P^{*}$,
and $\Z$ is said to be {\it non-uniformly relative D-stable} with
respect to $(\sigma,V)$.

In this situation, we can prove the following existence of a subsolution
of the equation for the multiplier Hermitian-Einstein metric:

\begin{lemma}\label{subsol}
Suppose a holomorphic vector field $V$ on $\Z$ is fiber-directed.
Moreover, we assume that
\begin{equation}\label{assumption}
f(t):=\exp\left(-\sigma(t)\right)\geqq
A_{0}(t-\alpha)\qquad
(\alpha\leqq{t}<\beta),
\end{equation}
for some positive constant $A_{0}>0$.
Then, there exists a smooth and strictly convex subsolution $u\in\bPSH$
of the equation for the multiplier Hermitian-Einstein metric
in the sense that
$$
Cg(\nabla u)\det\left(\nabla^{2}u\right)\geqq{e^{-u}}
$$
for some positive constant $C>0$,
where $g(\bm{z})$ is the non-negative function on $P^{*}$ defined by
\eqref{g-function}.
\end{lemma}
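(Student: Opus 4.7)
The plan is to reduce the subsolution inequality on $\Z$ to a real Monge--Amp\`ere inequality on $\mathbb{R}^{l}$ and then to construct the subsolution via the Legendre dual, choosing a convex function on $P^{*}$ whose boundary rate near the obstruction vertex is tuned to \eqref{assumption}. Under the correspondence $u \leftrightarrow \eta_{u}$ used in the proof of Theorem~\ref{ThmA2}, a smooth strictly convex $u \in \bPSH$ yields a bounded invariant psh metric on $K_{\Z}^{-1}$, and the inequality of the lemma becomes
$$
C\,h(\nabla u(\bm{y}))\,f\bigl(-\langle\bm{c},\nabla u(\bm{y})\rangle + C_{V}\bigr)\,\det(\nabla^{2}u(\bm{y})) \;\geq\; e^{-u(\bm{y})}
\qquad (\bm{y}\in\mathbb{R}^{l}),
$$
where $f(t)=e^{-\sigma(t)}$ and $h$ is from \eqref{h-function}.

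I would first observe that the natural candidate $u_{P}$ from \eqref{uP} is \emph{not} a subsolution in the non-uniformly stable regime. Indeed, by \eqref{HessuP} the inequality for $u_{P}$ reduces to a uniform lower bound on $f(\theta_{V}^{(\omega_{\refe})})$, which fails as $\bm{y}$ tends to infinity in the direction dual to the unique vertex $\bm{z}_{*}\in\mathcal{V}(P^{*})$ maximising $\langle\bm{c},\cdot\rangle$. A direct computation, transparent already in the one-dimensional model $P^{*}=[-1,1]$, shows that for $u_{P}$ both $f(\theta_{V}^{(\omega_{\refe})})$ and $\det(\nabla^{2}u_{P})$ decay exponentially in $|\bm{y}|$ along that direction, so the left-hand side falls short of $e^{-u_{P}}$ by an unbounded factor. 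Any subsolution must therefore force $\nabla u$ to approach $\bm{z}_{*}$ substantially more slowly than $\nabla u_{P}$ does.

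I would then build $u$ from its Legendre dual $\chi := u^{*}$ on $P^{*}$: take $\chi$ smooth and strictly convex on $\mathrm{Int}(P^{*})$, bounded on $P^{*}$ and with gradient diverging at $\partial P^{*}$, with local behaviour near $\bm{z}_{*}$ of the form
$$
\chi(\bm{z}) \;\sim\; -\bigl(\langle\bm{c},\bm{z}_{*}-\bm{z}\rangle\bigr)^{\beta}
\qquad (0<\beta<1),
$$
matched smoothly to the standard logarithmic behaviour of $u_{P}^{*}$ near the remaining vertices of $P^{*}$. Setting $u := \chi^{*}$ gives a smooth strictly convex $u \in \bPSH$. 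The exponent $\beta<1$ ensures that $\bm{z}_{*} - \nabla u(\bm{y})$ decays only polynomially in $|\bm{y}|$ along the critical ray, and then \eqref{assumption} produces polynomial lower bounds for $f(-\langle\bm{c},\nabla u\rangle + C_{V})$ and for $\det(\nabla^{2}u)$ that dominate the exponential decay of $e^{-u}$ along that ray. Along non-critical rays the $u_{P}$-type behaviour of $\chi$ is retained, so $h(\nabla u)\det(\nabla^{2}u)e^{u}=O(1)$ holds and the strictly positive factor $f(\theta_{V}^{(\omega_{u})})$ closes the estimate.

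\textbf{Main obstacle.} The key technical point is producing a single globally defined $\chi$ that simultaneously (i) has the desired slow polynomial behaviour near $\bm{z}_{*}$, (ii) retains standard logarithmic behaviour at the other vertices of $P^{*}$ so that $\nabla u$ is a global diffeomorphism onto $\mathrm{Int}(P^{*})$ and $u \in \bPSH$, and (iii) yields the pointwise inequality \emph{uniformly} in $\bm{y}$. The passage between the critical and non-critical sectors is the most delicate, since altering the Hessian of $u$ near $\bm{z}_{*}$ can spoil the $u_{P}$-type balance elsewhere. This extends the toric Fano construction of \cite[Sections~6 and~7]{Ya21} to the KSM setting via the reduction above.
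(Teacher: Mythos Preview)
Your proposal does not reach a proof and takes a substantially different route from the paper. You yourself flag the construction of a single global $\chi$ with the required mixed boundary behaviour as the ``main obstacle'' and do not resolve it; but that construction \emph{is} the content of the lemma, so what you have written is a strategy rather than an argument. There is also an unflagged gap: you assume the degeneracy locus of $g$ on $P^{*}$ is a single vertex $\bm{z}_{*}$, whereas $\{\bm{z}\in P^{*}:-\langle\bm{c},\bm{z}\rangle+C_{V}=\alpha\}$ is in general a positive-dimensional face of $P^{*}$ (the paper itself later isolates the condition $\dim\{\cdots\}\leqq l/2$ in Theorem~\ref{ThmAlex}), and your power-law ansatz $-(\langle\bm{c},\bm{z}_{*}-\bm{z}\rangle)^{\beta}$ is tailored to a point.

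The paper's argument is shorter and avoids any bespoke Legendre-dual construction. One takes the explicit log-sum-exp potential $u(\bm{y})=\log\sum_{\bm{p}\in\mathcal{V}(P^{*})}e^{\langle\bm{p},\bm{y}\rangle}$ over the vertices of $P^{*}$; since the affine function $k(\bm{z}):=-\langle\bm{c},\bm{z}\rangle+C_{V}$ satisfies $k(\nabla u(\bm{y}))=\bigl(\sum_{\bm{p}}k(\bm{p})e^{\langle\bm{p},\bm{y}\rangle}\bigr)\big/\bigl(\sum_{\bm{p}}e^{\langle\bm{p},\bm{y}\rangle}\bigr)$, the linear bound \eqref{assumption} with $\widetilde{k}:=k-\alpha\geqq 0$ gives at once
\[
\log f\bigl(k(\nabla u(\bm{y}))\bigr)\;\geqq\;\log A_{0}+\log\!\Bigl(\sum_{\bm{p}\in\mathcal{V}(P^{*})}\widetilde{k}(\bm{p})\,e^{\langle\bm{p},\bm{y}\rangle}\Bigr)-u(\bm{y}).
\]
With this inequality in hand, the remaining comparison between $h(\nabla u)f(k(\nabla u))\det(\nabla^{2}u)$ and $e^{-u}$ is exactly the toric computation of \cite[Proof of Lemma~41]{Ya21}, to which the paper simply defers. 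The hypothesis \eqref{assumption} is thus used once, algebraically, to replace $f$ by a ratio of two log-sum-exp expressions; no asymptotic analysis of rates of approach to $\partial P^{*}$, and no gluing of local models, is needed.
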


\begin{proof}
We put $\widetilde{t}:=t-\alpha\,\,\,(\widetilde{t}\geqq{0})$, and
$\widetilde{f}(\widetilde{t}):=f(\widetilde{t}+\alpha)=f(t)$. Then we
have, for $\widetilde{t}\geqq{0}$,
\begin{align*}
&\widetilde{f}(\widetilde{t})\geqq\widetilde{f}(0)=f(\alpha)=0,\\
&\widetilde{f}(\widetilde{t})\geqq{A_{0}\widetilde{t}}.
\end{align*}

We put $k(\bm{z}):=-\langle\bm{c},\bm{z}\rangle+C_{V}$,
where $\bm{c}$ and ${C_{V}}$ are the unique constants defined
by \eqref{thetaV}, and
$\widetilde{k}(\bm{z}):=k(\bm{z})-\alpha$. Then we have
$g(\bm{z})=h(\bm{z})f(k(\bm{z}))$ and
$\widetilde{k}(\bm{z})\geqq{0}$ on $P^{*}$, where $h(\bm{z})$
is the positive function on $P^{*}$ defined by
\eqref{h-function}. Hence, we have
\begin{align*}
\log{f}\left(
\frac{\sum_{p\in{V(P^{*})}}k(p)
e^{\langle\bm{p},\bm{z}\rangle}}
{\sum_{p\in{V(P^{*})}}e^{\langle\bm{p},\bm{z}\rangle}}
\right)&=
\log\widetilde{f}\left(
\frac{\sum_{p\in{V(P^{*})}}\widetilde{k}(p)
e^{\langle\bm{p},\bm{z}\rangle}}
{\sum_{p\in{V(P^{*})}}e^{\langle\bm{p},\bm{z}\rangle}}
\right)\\&\geqq
\log\left(A_{0}\left(\frac{
\sum_{p\in{V(P^{*})}}\widetilde{k}(p)
e^{\langle\bm{p},\bm{z}\rangle}}
{\sum_{p\in{V(P^{*})}}e^{\langle\bm{p},\bm{z}\rangle}}
\right)\right)\\&=
\log{A_{0}}+
\log\left(\sum_{p\in{V(P^{*})}}\widetilde{k}(p)
e^{\langle\bm{p},\bm{z}\rangle}\right)-
\log\left(\sum_{p\in{V(P^{*})}}
e^{\langle\bm{p},\bm{z}\rangle}\right),
\end{align*}
where $V(P^{*})$ is the set of vertices of $P^{*}$.
In view of this inequality, the similar argument as that in
\cite[Proof of Lemma~41]{Ya21} allows us to prove Lemma~\ref{subsol}.
\end{proof}

In view of this lemma, by the same argument as that in
\cite[Proof of Theorem~41]{Ya21}
(see also \cite[Theorem~2.16]{BB13})
we can prove the following partial coercivity for
$D_{V}^{\sigma}$:

\begin{theorem}\label{MTineq}
Assume the same condition as in Lemma~\ref{subsol} and the integral
condition \eqref{barycenter}.
Then for any $\e\in (0,1)$, there exists a positive constant $C>0$ such
that 
$$
D_{V}^{\sigma}(u)\geqq
\e\frac{1}{|P|_{V}^{\sigma}}
\int_{P^{*}}u^{*}(\bm{z})g(\bm{z})\bm{dz}-C
$$
for any $u\in\Eone$ satisfying $u\geqq{u(\bm{0})}=0$.
Moreover, we also have
$$
D_{V}^{\sigma}(u)\geqq
\e\Jfct(u)-C
$$
for any $u\in\Eone$. In particular, $D_{V}^{\sigma}$ is bounded from
below.
\end{theorem}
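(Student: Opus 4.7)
The plan is to follow the Berman--Berndtsson strategy of \cite[Theorem~2.16]{BB13}, as adapted to the toric non-uniformly stable setting in \cite[Proof of Theorem~41]{Ya21}, with the subsolution $\tilde{u}\in\bPSH$ supplied by Lemma~\ref{subsol} replacing the exact solution that is unavailable here. The first inequality in the theorem is algebraically equivalent to the Moser--Trudinger-type estimate
\[
\log\int_{\mathbb{R}^{l}}e^{-u(\bm{y})}\bm{dy}\leqq(1-\e)\frac{1}{\Pvol}\int_{P^{*}}u^{*}(\bm{z})g(\bm{z})\bm{dz}+C,
\]
so the central effort is to establish this upper bound for $u\in\Eone$ normalized by $u\geqq u(\bm{0})=0$.

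First I would use the translation invariance $D_{V}^{\sigma}(u+c)=D_{V}^{\sigma}(u)$ together with the invariance under the addition to $u$ of affine functions of $\bm{y}$ granted by the barycenter condition \eqref{barycenter}, to reduce to the class $u\geqq u(\bm{0})=0$, so that $u^{*}\geqq u^{*}(\bm{0})=0$ on $P^{*}$. Next, starting from the subsolution inequality $e^{-\tilde{u}}\leqq C_{0}\,g(\nabla\tilde{u})\det(\nabla^{2}\tilde{u})$ of Lemma~\ref{subsol}, I would write
\[
\int_{\mathbb{R}^{l}}e^{-u}\bm{dy}\leqq C_{0}\int_{\mathbb{R}^{l}}e^{-(u-\tilde{u})}\,g(\nabla\tilde{u})\det(\nabla^{2}\tilde{u})\bm{dy}
\]
and apply the Legendre change of variables $\bm{z}=\nabla\tilde{u}(\bm{y})\colon\mathbb{R}^{l}\to\mathrm{Int}(P^{*})$, under which $\det(\nabla^{2}\tilde{u})\bm{dy}=\bm{dz}$, so the right-hand side becomes an integral over $P^{*}$ against $g(\bm{z})\bm{dz}$. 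Finally I would apply H\"older's inequality with conjugate exponents $1/(1-\e)$ and $1/\e$ to separate an $e^{-(1-\e)u}$ factor from a residual weight depending only on $\tilde{u}$; combining with the Legendre duality $u(\bm{y})+u^{*}(\bm{z})\geqq\langle\bm{y},\bm{z}\rangle$ and taking $\log$ recombines the pieces into $(1-\e)\Pvol^{-1}\int_{P^{*}}u^{*}g\,\bm{dz}+C$.

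For the second assertion $D_{V}^{\sigma}(u)\geqq\e\Jfct(u)-C$ on all of $\Eone$, I would remove the sign normalization: for general $u\in\Eone$, pick $\bm{a}\in\partial u^{*}(\bm{0})$ and translate $u$ by the affine function $\langle\bm{a},\bm{y}\rangle+u^{*}(\bm{0})$, which leaves $D_{V}^{\sigma}$ invariant by the barycenter condition; applying the first inequality to the normalized potential and invoking Lemma~\ref{J-relation2} to compare $\Pvol^{-1}\int u^{*}g\,\bm{dz}$ with $\Jfct(u)$ produces the desired estimate. Boundedness of $D_{V}^{\sigma}$ from below is then immediate from $\Jfct(u)\geqq 0$, which follows from its definition by choosing $\ell$ to be an affine support of $u^{*}$ at its minimum.

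The main obstacle will be the H\"older step: the conjugate exponents must be chosen so that the residual factor $e^{\e\tilde{u}/(1-\e)}g$ (viewed on $P^{*}$ after the Legendre change of variables) is integrable with a bound independent of $u$. This is precisely where the linear lower bound $f(t)\geqq A_{0}(t-\alpha)$ from \eqref{assumption} enters via Lemma~\ref{subsol}: it controls how fast $g(\bm{z})$ is allowed to vanish on the face of $\partial P^{*}$ where $\sigma(\theta_{V}^{(\omega)})$ blows up, and is what makes a bounded-potential subsolution $\tilde{u}\in\bPSH$ available at all. The remaining bookkeeping is to track the $\e$-dependence of $C$, since both $\tilde{u}$ and the H\"older split depend on $\e$.
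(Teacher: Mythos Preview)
Your proposal aligns with the paper's own treatment, which simply defers to \cite[Proof of Theorem~41]{Ya21} and \cite[Theorem~2.16]{BB13} after invoking Lemma~\ref{subsol}; you have correctly identified these references, the role of the subsolution, the equivalent Moser--Trudinger formulation, and the reduction of the second assertion to the first via affine normalization under the barycenter condition.

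The one place where your outline is shakier than the cited arguments is the H\"older step. After the subsolution bound and the Legendre change of variables you arrive (up to constants) at
\[
\int_{\mathbb{R}^{l}}e^{-u}\,\bm{dy}\;\leqq\;C_{0}\int_{P^{*}}e^{\,u^{*}(\bm{z})-\tilde{u}^{*}(\bm{z})}\,g(\bm{z})\,\bm{dz},
\]
and a direct H\"older or Jensen application to the right-hand side controls quantities of the form $\int_{P^{*}}e^{\lambda u^{*}}g\,\bm{dz}$, not $\exp\bigl((1-\e)\Pvol^{-1}\int_{P^{*}}u^{*}g\,\bm{dz}\bigr)$; Jensen's inequality relates these two in the wrong direction, so the ``recombination'' you describe does not go through as stated. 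The arguments in \cite{BB13,Ya21} instead exploit the convexity of $D_{V}^{\sigma}$ along toric geodesics (a Pr\'ekopa-type statement): one connects $u$ to the subsolution $\tilde{u}$ by the geodesic $u_{t}^{*}=(1-t)\tilde{u}^{*}+tu^{*}$ and uses the subsolution inequality, together with $\tilde{u}\in\bPSH$, to control the relevant slope; this is what produces the $\e$-gain. Replacing your H\"older step by this convexity mechanism should close the gap, while the remainder of your outline (normalization, passage to $\Jfct$, and boundedness from $\Jfct\geqq0$) is correct as written.
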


Moreover, by the same argument as that in \cite[Theorem~1.4]{Na3} due
to the second author, we can prove the following pseudo-boundedness for
$D_{V}^{\sigma}$ without assuming the condition \eqref{assumption}:

\begin{theorem}
If the integral condition \eqref{barycenter} holds, then for an arbitrary
$\varepsilon>0$, there exists a positive constant $C_{\varepsilon}$ such
that
$$
D_{V}^{\sigma}(u)\geqq
-\varepsilon\JfctORG(u)-C_{\varepsilon},
$$
for any $u\in\Eone$.
\end{theorem}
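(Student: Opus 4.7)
The plan is to adapt the argument of \cite[Theorem~1.4]{Na3}: approximate $\sigma$ by a family of non-degenerate perturbations $\sigma_{\delta}$ satisfying \eqref{assumption}, apply Theorem~\ref{MTineq} to the perturbed data, and then carefully compare $D_{V}^{\sigma}$ with $D_{V}^{\sigma_{\delta}}$. Since the integral condition \eqref{barycenter} gives $\bm{b}_{g} = \bm{0}$, both $D_{V}^{\sigma}$ and $\JfctORG$ are invariant under replacing $u^{*}$ by $u^{*} - \ell$ for any affine $\ell$; I therefore first normalize $u \in \Eone$ so that $u^{*}(\bm{0}) = 0$ and $\bm{0} \in \partial u^{*}(\bm{0})$, which forces $u^{*} \geq 0$ on $P^{*}$.

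For each $\delta > 0$ I construct a smooth modification $\sigma_{\delta}$ of $\sigma$ satisfying $e^{-\sigma_{\delta}(t)} \geq \delta(t-\alpha)$ (so \eqref{assumption} holds with $A_{0} = \delta$) together with the uniform closeness $0 \leq e^{-\sigma_{\delta}(t)} - e^{-\sigma(t)} \leq C\delta$ on the range of $\theta_{V}^{(\omega)}$; a natural candidate is $\sigma_{\delta}(t) := -\log\bigl(e^{-\sigma(t)} + \delta(t-\alpha)\bigr)$. After a small additional affine correction of size $O(\delta)$ to $\sigma_{\delta}$, chosen so that the perturbed weight $g_{\delta}(\bm{z}) := h(\bm{z}) e^{-\sigma_{\delta}(-\langle \bm{c}, \bm{z}\rangle + C_{V})}$ has barycenter zero, the integral condition is preserved and Theorem~\ref{MTineq} applies to $\sigma_{\delta}$. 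In particular, since $J_{V,\mathrm{red}}^{\sigma_{\delta}}(u) \geq 0$ by definition, the estimate of Theorem~\ref{MTineq} (say, with $\e = 1/2$) yields
\[
D_{V}^{\sigma_{\delta}}(u) \;\geq\; -C_{\delta}
\]
for some constant $C_{\delta}$ independent of $u$.

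It remains to estimate the difference $D_{V}^{\sigma}(u) - D_{V}^{\sigma_{\delta}}(u)$. Since the logarithmic term in the Ding functional is independent of $\sigma$, this difference equals
\[
\frac{1}{\Pvol}\int_{P^{*}} u^{*}(\bm{z}) g(\bm{z})\, d\bm{z} - \frac{1}{|P^{*}|_{V}^{\sigma_{\delta}}}\int_{P^{*}} u^{*}(\bm{z}) g_{\delta}(\bm{z})\, d\bm{z}.
\]
Combining $u^{*} \geq 0$ with the uniform estimates $g_{\delta} - g \leq C\delta\, h$ and $|P^{*}|_{V}^{\sigma_{\delta}} - \Pvol \leq C\delta$, and using Lemma~\ref{J-relation2} (which bounds $\int_{P^{*}} u^{*} h\, d\bm{z}$ in terms of $\JfctORG(u)$ up to an additive constant under the chosen normalization), one obtains
\[
\bigl|D_{V}^{\sigma}(u) - D_{V}^{\sigma_{\delta}}(u)\bigr| \;\leq\; C\delta \JfctORG(u) + C'_{\delta}.
\]
Given $\varepsilon > 0$, choosing $\delta = \varepsilon/C$ then gives $D_{V}^{\sigma}(u) \geq -\varepsilon \JfctORG(u) - C_{\varepsilon}$ as required.

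The main obstacle is the construction of $\sigma_{\delta}$: it must simultaneously satisfy the non-degeneracy \eqref{assumption} with constant controlled by $\delta$, obey the monotonicity and convexity requirements imposed on $\sigma$, preserve the barycenter of the weight on $P^{*}$, and remain uniformly close to $\sigma$. Verifying that the affine correction needed to restore the barycenter can be chosen of size $O(\delta)$ without violating the other conditions, and that the resulting error terms (the change in $\Pvol$, the shift in the normalization, and the comparison with $\JfctORG$) are all controlled uniformly in $u \in \Eone$, is the delicate technical heart of the argument.
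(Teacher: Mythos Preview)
The paper itself gives no self-contained proof here; it simply invokes \cite[Theorem~1.4]{Na3}. Your perturbation strategy (regularize $\sigma$ to a non-degenerate $\sigma_\delta$, apply a coercivity-type estimate to $D_V^{\sigma_\delta}$, and compare) is the natural approach and is presumably the one intended.

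There is, however, a genuine gap in your comparison step. You appeal to Lemma~\ref{J-relation2} to bound $\int_{P^{*}}u^{*}h\,\bm{dz}$ in terms of $\JfctORG(u)$ under the normalization $u^{*}(\bm{0})=0$, $u^{*}\geqq 0$. But the upper bound in Lemma~\ref{J-relation2} is $\frac{B}{A}\JfctORG(u)$, where $A$ is the lower bound in \eqref{positivity}. In the non-uniform setting of Section~\ref{NUStable} one has $\inf_{P^{*}}e^{-\sigma(-\langle\bm{c},\bm{z}\rangle+C_{V})}=0$, so $A=0$ and the lemma gives nothing. Applying Lemma~\ref{J-relation2} instead to $\sigma_\delta$ does not help either: with your choice $e^{-\sigma_\delta}=e^{-\sigma}+\delta(t-\alpha)$ one still has $A_\delta=0$, and with the alternative $e^{-\sigma_\delta}=e^{-\sigma}+\delta$ one gets $A_\delta\sim\delta$, so the resulting bound $\int u^{*}h\leqq\frac{C}{\delta}\JfctORG(u)$ exactly cancels the factor $\delta$ in $g_\delta-g$, leaving an error of size $C\,\JfctORG(u)$ rather than $C\delta\,\JfctORG(u)$. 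You therefore need an \emph{independent} argument that, under the normalization at $\bm{0}=\bm{b}_g$, the quantity $\int_{P^{*}}u^{*}h\,\bm{dz}$ is controlled by $\JfctORG(u)$; this uses that both $\bm{0}$ and $\bm{b}_h$ lie in $\mathrm{Int}(P^{*})$ but is not a consequence of Lemma~\ref{J-relation2} as stated.

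A secondary issue you already flag: the ``affine correction'' to restore the barycenter for $g_\delta$ must be made compatible with the structural hypotheses on $\sigma$ (the monotonicity $\dot\sigma\leqq 0$ and convexity $\ddot\sigma\geqq 0$ assumed throughout Section~\ref{NUStable}, and the lower bound \eqref{assumption}); an affine shift of $\sigma_\delta$ perturbs $e^{-\sigma_\delta}$ multiplicatively, not additively, so it is not obvious that all three conditions can be preserved simultaneously with an $O(\delta)$ change.
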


For any $u\in\Eone$ and $K\subset\mathbb{R}^{l}$, we put
$\partial{u}(K):=\bigcup_{\bm{z}\in{K}}\partial{u}(\bm{z})$.
The {\it Monge-Amp\`{e}re measure} $\mathrm{MA}_{g}(u)$ on
$\mathbb{R}^{l}$ in the {\it sense of Alexandrov} is defined by
$$
\mathrm{MA}_{g}(u)(B):=
\frac{1}{|P^{*}|_{V}^{\sigma}}
\int_{\partial{u}(B)}g(\bm{z})\bm{dz},
$$
for a Borel set $B\subset\mathbb{R}^{l}$
(see for instance \cite[Section~3.3.1]{Ya21}).

Since
$\left\{\bm{z}\in{P^{*}}\,|\,g(\bm{z})=0\right\}=
\left\{\bm{z}\in{P^{*}}\,|\,
-\langle\bm{c},\bm{z}\rangle+C_{V}=\alpha\right\}$,
by the same argument as that in \cite[Proof of Theorem~44]{Ya21},
Theorem~\ref{MTineq} allows us
to prove the following theorem:

\begin{theorem}\label{ThmAlex}
Assume the same condition as in Lemma~\ref{subsol} and the integral
condition \eqref{barycenter}.
Then there exists $u\in\bPSH$ such that
$$
\mathrm{MA}_{g}(u)=
\frac{e^{-u}}{\int_{\mathbb{R}^{n}}e^{-u(\bm{y})}\bm{dy}}
$$
on $\mathbb{R}^{l}$, in the sense of Alexandrov.
The Legendre dual $u^{*}$ of $u$ is H\"{o}lder continuous on $P^{*}$ for
any H\"{o}lder exponent $\gamma\in(0,1)$.
Moreover, if
$\dim\left\{\bm{z}\in{P^{*}}\,|\,
-\langle\bm{c},\bm{z}\rangle+C_{V}=\alpha\right\}\leqq\frac{l}{2}$, then
$u$ is smooth and strictly convex on $\mathbb{R}^{l}$, and $\nabla{u}$
induces a diffeomorphism from $\mathbb{R}^{l}$ to
$\mathrm{Int}(P^{*})$.
\end{theorem}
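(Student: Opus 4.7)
The plan is to follow Yao's variational strategy \cite[Proof of Theorem~44]{Ya21}: produce a bounded minimizer of $D_V^{\sigma}$ via the partial coercivity of Theorem~\ref{MTineq}, identify it with an Alexandrov solution of the prescribed equation through a first-variation computation, and then extract the regularity from the resulting Monge-Amp\`ere equation.

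First I would take a minimizing sequence $\{u_j\}\subset\Eone$ of $D_V^{\sigma}$. The barycenter condition \eqref{barycenter} is equivalent to $\bm{b}_{g}=\bm{0}$, and hence $D_V^{\sigma}$ is invariant under $u\mapsto(u^{*}+\ell)^{*}$ for affine $\ell$ (as exploited in the proof of Theorem~\ref{ThmC}), so I may normalize $u_j\geqq u_j(\bm{0})=0$. Theorem~\ref{MTineq} then yields uniform bounds on both $\int_{P^{*}}u_j^{*}g\bm{dz}$ and $\Jfct(u_j)$; combined with the normalization this gives precompactness of $\{u_j\}$ in $\bPSH$ for locally uniform convergence on $\mathbb{R}^{l}$, by the standard compactness machinery for normalized elements of $\Eone$ developed in \cite[Sect.~3.4]{Ya21}. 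A subsequential limit $u\in\bPSH$ is then a minimizer of $D_V^{\sigma}$.

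For such a minimizer, the formal first-variation calculation based on the change of variables $\bm{z}=\nabla u(\bm{y})$ and the Legendre duality $\delta u=-\delta u^{*}(\nabla u)$ gives
\[
\delta D_V^{\sigma}(\delta u)=\int_{\mathbb{R}^{l}}(-\delta u)\left(\tfrac{1}{\Pvol}g(\nabla u)\det(\nabla^{2}u)-\tfrac{e^{-u}}{\int_{\mathbb{R}^{l}}e^{-u}\bm{dy}}\right)\bm{dy}.
\]
Interpreting $\tfrac{1}{\Pvol}g(\nabla u)\det(\nabla^{2}u)\bm{dy}$ intrinsically as the Alexandrov Monge-Amp\`ere measure $\mathrm{MA}_{g}(u)$, the vanishing of $\delta D_V^{\sigma}$ on the minimizer $u$ translates into
\[
\mathrm{MA}_{g}(u)=e^{-u}\bm{dy}\Big/\int_{\mathbb{R}^{l}}e^{-u(\bm{y})}\bm{dy}
\]
on $\mathbb{R}^{l}$.

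For regularity, $u\in\bPSH$ means $u=v_{P^{*}}+O(1)$, so $u^{*}$ is a bounded convex function on $P^{*}$, and standard convex-analytic estimates give $u^{*}\in C^{0,\gamma}(P^{*})$ for every $\gamma\in(0,1)$. Under the dimensional hypothesis $\dim\{\bm{z}\in P^{*}\mid-\langle\bm{c},\bm{z}\rangle+C_{V}=\alpha\}\leqq l/2$, the zero locus of $g=h\cdot e^{-\sigma(-\langle\bm{c},\cdot\rangle+C_{V})}$ is small enough that Caffarelli-type interior regularity for the real Monge-Amp\`ere equation with degenerate right-hand side (as adapted in \cite[Sect.~7]{Ya21}) yields strict convexity and smoothness of $u$ on $\mathbb{R}^{l}$, whence $\nabla u:\mathbb{R}^{l}\to\mathrm{Int}(P^{*})$ is a diffeomorphism by Legendre duality. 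The main obstacle is this last regularity step: one must propagate the dimensional smallness of $\{g=0\}$ through Caffarelli's estimates and, in particular, check that the extra polynomial factor $h(\bm{z})=\prod_{\alpha}(1+\langle\bm{\mu}_{\alpha},\bm{z}\rangle)$ coming from the base K\"ahler-Einstein factor does not degrade the integrability conditions on which the classical regularity theory depends.
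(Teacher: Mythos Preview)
Your overall strategy matches the paper's, which simply defers to Yao's variational argument \cite[Proof of Theorem~44]{Ya21} with Theorem~\ref{MTineq} supplying the needed partial coercivity. The minimizing-sequence construction, normalization via the affine invariance coming from $\bm{b}_{g}=\bm{0}$, and identification of the minimizer as an Alexandrov solution through the first variation are all correct and in line with that argument.

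There are, however, two genuine gaps in the details. First, the claim that ``$u\in\bPSH$ \dots\ so $u^{*}$ is a bounded convex function on $P^{*}$, and standard convex-analytic estimates give $u^{*}\in C^{0,\gamma}(P^{*})$ for every $\gamma\in(0,1)$'' is false as stated: a bounded convex function on a polytope need not be H\"older continuous up to the boundary with arbitrary exponent (e.g.\ $-\sqrt{x}$ on $[0,1]$ is bounded and convex but only $C^{0,1/2}$). The H\"older regularity of $u^{*}$ must instead be extracted from the Monge--Amp\`ere equation itself, using that the right-hand side $e^{-u}$ is bounded and that the weight $g$ degenerates only along the affine hyperplane $\{-\langle\bm{c},\bm{z}\rangle+C_{V}=\alpha\}$; this is precisely the input the paper isolates before invoking Yao's argument. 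Second, you pass directly from coercivity bounds on $\Jfct$ to ``precompactness of $\{u_{j}\}$ in $\bPSH$'', but the partial coercivity of Theorem~\ref{MTineq} only controls the minimizing sequence in $\Eone$. Upgrading the limit to $\bPSH$ requires comparison with the explicit subsolution of Lemma~\ref{subsol} (cf.\ \cite[Theorem~2.16]{BB13}), which you never invoke. Both points are exactly where the hypotheses of Lemma~\ref{subsol} enter, so you should make their use explicit.
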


%
%
%

\section{Examples}\label{section:example}

In this section, we shall give an example of a KSM-manifold which admits
non-trivial family of multiplier Hermitian-Einstein metrics, and also an
example of a non-uniformly relative D-stable KSM-manifold.

First, we put
\begin{align*}
&\sigma_{0}(t):=-t\qquad(-\infty<t<+\infty),\\
&\sigma_{1}(t):=-\log(t+1)\qquad(-1<t<+\infty),
\end{align*}
which correspond to K\"{a}hler-Ricci solitons and Mabuchi solitons,
respectively. Moreover, for any $0\leqq\tau\leqq{1}$, we put
$$
\sigma_{\tau}(t):=(1-\tau)\sigma_{0}(t)+\tau\sigma_{1}(t)
=-(1-\tau)t-\tau\log(t+1).
$$
Then $\sigma_{\tau}(t)$ is defined on $(-1,+\infty)$ for
$0\leqq\tau\leqq{1}$.

\begin{example}
Let
$Z_{1}:=\mathbb{P}\left(
\mathcal{O}_{\mathbb{P}^{1}}(1)\oplus\mathcal{O}_{\mathbb{P}^{1}}
\right)$
be the KSM-manifold associated to the $(1,1)$-dimensional KSM-data
$\mathfrak{M}=(\mathbb{P}^{1}(\mathbb{C});
\mathcal{O}_{\mathbb{P}^{1}}(1);[-1,1])$.
Now, fix any $\tau\in[0,1]$ and let $k_{\tau}^{(1)}(z)=b_{1}z+b_{2}$ be
an affine function on $[-1,1]$ corresponding to a fiber-directed
holomorphic vector $V_{\tau}^{(1)}$ on $Z_{1}$. In this case, the
normalization condition in \eqref{theta-normalization} becomes
$$
\int_{-1}^{1}\left(b_{1}z+b_{2}\right)\left(1+\frac{1}{2}z\right)dz
=\frac{1}{3}b_{1}+2b_{2}=0.
$$
Therefore, we have $b_{1}=-6b_{2}$. Furthermore, on $[-1,1]$,
$k_{\tau}^{(1)}(z)$ must
be greater than $-1$. Hence, $b_{2}$ is necessarily satisfied
$-1/7<b_{2}<1/5$. In this case, the integral condition \eqref{barycenter}
is equivalent to
$$
I_{\tau}^{(1)}(b_{2}):=
\int_{-1}^{1}z\left(1+\frac{1}{2}z\right)
\left(-6b_{2}z+b_{2}+1\right)^{\tau}
e^{(1-\tau)(-6b_{2}z+b_{2})}
=0.
$$
We want to show that we can find a solution $b_{2}\in(-1/7,1/5)$ for
the equation $I_{\tau}^{(1)}(b_{2})=0$. Simple calculation allows us to
have 
\begin{align*}
I_{\tau}^{(1)}\!\left(-\frac{1}{7}\right)&=
\left(\frac{6}{7}\right)^{\tau}e^{-\frac{1}{7}(1-\tau)}
\int_{-1}^{1}z\left(1+\frac{1}{2}z\right)\left(z+1\right)^{\tau}
e^{\frac{6}{7}(1-\tau)z}dz\\&=
\left(\frac{6}{7}\right)^{\tau}e^{-\frac{1}{7}(1-\tau)}
\int_{0}^{1}z\left\{
\left(1+\frac{1}{2}z\right)\left(1+z\right)^{\tau}
e^{\frac{6}{7}(1-\tau)z}dz\right.\\&
\qquad\qquad\qquad\qquad\qquad\quad+\left.
\left(1-\frac{1}{2}z\right)\left(1-z\right)^{\tau}
e^{-\frac{6}{7}(1-\tau)z}\right\}dz.
\end{align*}
We want to prove
$\left(1+\frac{1}{2}z\right)\left(1+z\right)^{\tau}
e^{\frac{6}{7}(1-\tau)z}
+
\left(1-\frac{1}{2}z\right)\left(1-z\right)^{\tau}
e^{-\frac{6}{7}(1-\tau)z}\geqq{0}$ for $0\leqq{z}\leqq{1}$.
This is equivalent to
$$
F_{\tau}(z):=
\left(\frac{2+z}{2-z}\right)
\left(\frac{1+z}{1-z}\right)^{\!\tau}
e^{\frac{12}{7}(1-\tau)z}
\geqq{1},
$$
for $0\leqq{z}\leqq{1}$, which clearly holds.
Hence, we conclude that $I_{\tau}^{(1)}(-1/7)>0$ for
$0\leqq\tau\leqq{1}$.

On the other hand, for $b_{2}=1/5$, simple calculation also implies that
\begin{align*}
I_{\tau}^{(1)}\!\left(\frac{1}{5}\right)&=
\left(\frac{6}{5}\right)^{\!\tau}
e^{\frac{1-\tau}{5}}
\int_{-1}^{1}z\left(1+\frac{1}{2}z\right)
\left(1-z\right)^{\!\tau}
e^{-\frac{6}{5}(1-\tau)z}dz\\&=
\left(\frac{6}{5}\right)^{\!\tau}
e^{\frac{1-\tau}{5}}
\int_{0}^{1}z\left\{
\left(1+\frac{1}{2}z\right)
\left(1-z\right)^{\!\tau}
e^{-\frac{6}{5}(1-\tau)z}\right.\\&
\qquad\qquad\qquad\qquad\quad
\left.-\left(1-\frac{1}{2}z\right)
\left(1+z\right)^{\!\tau}
e^{\frac{6}{5}(1-\tau)z}
\right\}dz.
\end{align*}
Now, we want to prove
$\left(1-\frac{1}{2}z\right)
\left(1+z\right)^{\!\tau}e^{\frac{6}{5}(1-\tau)z}-
\left(1+\frac{1}{2}z\right)\left(1-z\right)^{\!\tau}
e^{-\frac{6}{5}(1-\tau)z}\geqq{0}$, which is equivalent to
$$
G_{\tau}(z):=
\left(\frac{2-z}{2+z}\right)
\left(\frac{1+z}{1-z}\right)^{\!\tau}
e^{\frac{12}{5}(1-\tau)z}\geqq{1},
$$
for $0\leqq{z}\leqq{1}$.
We have $G_{\tau}(0)=1$ and
$$
G_{\tau}^{\prime}(z)=
\frac{2e^{\frac{12}{5}(1-\tau)z}}{5(2+z)^{2}(1-z)^{2}}
\left(\frac{1+z}{1-z}\right)^{\!\tau-1}
\left\{5\tau(2+z^{2})+2(1-\tau)(1-z^{2})(7-3z^{2})\right\}
\geqq{0},
$$
for $0\leqq{z}<1$. Hence, for $0\leqq\tau\leqq{1}$,
we get $G_{\tau}(z)\geqq{1}$, which
implies $I_{\tau}^{(1)}(1/5)<0$.
By the continuity of
$I_{\tau}^{(1)}(b_{2})$ for $-1/7\leqq{b_{2}}\leqq{1/5}$, we can find a
solution $b_{2}\in(-1/7,1/5)$ for the equation
$I_{\tau}^{(1)}(b_{2})=0$.
Therefore, by Theorem~\ref{ThmA}, the KSM-manifold
$Z_{1}=\mathbb{P}\left(
\mathcal{O}_{\mathbb{P}^{1}}(1)\oplus\mathcal{O}_{\mathbb{P}^{1}}
\right)$ admits a multiplier Hermitian-Einstein metric of type
$(\sigma_{\tau},V_{\tau}^{(1)})$ for any $0\leqq\tau\leqq{1}$.
\end{example}

\begin{example}
First of all, we note that $\sigma_{\tau}(t)$, defined as above,
satisfies the assumption \eqref{assumption} for any
$0\leqq\tau\leqq{1}$, that is, 
$$
e^{-\sigma_{\tau}(t)}=
\left(t+1\right)^{\tau}e^{(1-\tau)t}
\geqq{t+1}
\qquad
(-1\leqq{t}<+\infty).
$$
Let
$Z_{2}:=\mathbb{P}\left(
\mathcal{O}_{\mathbb{P}^{2}}(2)\oplus\mathcal{O}_{\mathbb{P}^{2}}
\right)$
be the KSM-manifold associated to the $(2,1)$-dimensional KSM-data
$\mathfrak{M}=(\mathbb{P}^{2}(\mathbb{C});
\mathcal{O}_{\mathbb{P}^2}(2);[-1,1])$.
Since $Z_{2}$ is a toric Fano manifold, $Z_{2}$ admits a
K\"{a}hler-Ricci soliton, of course. However, Mabuchi proved that
$Z_{2}$ admits no Mabuchi solitons in \cite[Example~6.6]{Ma01}.
Now, for any $\tau\in[0,1]$, let $k_{\tau}^{(2)}(z)=b_{1}z+b_{2}$ be
an affine function on $[-1,1]$ corresponding to a fiber-directed
holomorphic vector $V_{\tau}^{(2)}$ on $Z_{2}$. In this case, the
normalization condition in \eqref{theta-normalization} becomes
$$
\int_{-1}^{1}\left(b_{1}z+b_{2}\right)\left(1+\frac{2}{3}z\right)^{\!2}dz
=\frac{8}{9}b_{1}+\frac{62}{27}b_{2}=0.
$$
Therefore, we have $b_{2}=-\frac{12}{31}b_{1}$.
Furthermore, the condition $k_{\tau}^{(2)}(z)>-1$ on $[-1,1]$ is
equivalent to
$-31/19<b_{1}<31/43$. In this case, the integral condition
\eqref{barycenter} becomes
$$
I_{\tau}^{(2)}(b_{1}):=
\int_{-1}^{1}z\left(1+\frac{2}{3}z\right)^{\!2}
\left(b_{1}z-\frac{12}{31}b_{1}+1\right)^{\!\tau}
e^{(1-\tau)(b_{1}z-\frac{12}{31}b_{1})}dz=0.
$$
For $b_{1}=-31/19$, by a simple calculation, we have
\begin{align*}
&I_{0}^{(2)}\left(-\frac{31}{19}\right)=
\frac{19}{9\cdot{31^{4}}}\left(
-2268214\cdot{e^{-\frac{31}{19}}}+80048\cdot{e^{\frac{31}{19}}}
\right)<0,\\
&I_{1}^{(2)}\left(-\frac{31}{19}\right)=\frac{62}{855}>0.
\end{align*}
By the continuity of $I_{\tau}^{(2)}(-31/19)$ for $\tau\in[0,1]$, we can
find a $\tau_{0}\in(0,1)$ satisfying $I_{\tau_{0}}^{(2)}(-31/19)=0$. For
this $\tau_{0}$ and $b_{1}=-31/19$, we have
\begin{align*}
&k_{\tau_{0}}^{(2)}(z)=-\frac{31}{19}z+\frac{12}{19},\\
&k_{\tau_{0}}^{(2)}(1)=-1,\\
&k_{\tau_{0}}^{(2)}(-1)=\frac{43}{19}.
\end{align*}
Therefore, $k_{\tau_{0}}^{(2)}(z)$ satisfies the integral condition
\eqref{barycenter} and
$\exp\!\left(-\sigma_{\tau_{0}}(k_{\tau_{0}}^{(2)}(z))\right)\geqq{0}$
on $[-1,1]$. 
Hence,
$Z_{2}=\mathbb{P}\left(
\mathcal{O}_{\mathbb{P}^{2}}(2)\oplus\mathcal{O}_{\mathbb{P}^{2}}
\right)$
is non-uniformly relative D-stable with respect to
$\left(\sigma_{\tau_{0}},V_{\tau_{0}}^{(2)}\right)$.
\end{example}


\bigskip

\end{document}